\documentclass[a4 paper,12pt,bezier]{article}
\usepackage[top=3cm,bottom=3cm,left=2.5cm,right=2.5cm]{geometry}
\usepackage{amsmath}
\usepackage{amsfonts,amsthm,amssymb}
\usepackage{amsfonts}
\usepackage{graphics,subfigure,caption2}
\usepackage{tikz, color}
\usepackage{graphics,epstopdf}
\usepackage{latexsym,bm}
\usepackage{amsfonts,amsthm,amssymb,mathrsfs,bbding}
\usepackage{hyperref}
\usepackage{CJK}
\usepackage{color}
\usepackage{indentfirst}
\usepackage{graphicx}
\usepackage{cleveref}
\usepackage{booktabs}
\usepackage{multirow}
\usepackage{cleveref}

\usepackage{color}
\usepackage{enumerate}

\newcommand\red[1] {{\color{red} #1}}

\newtheorem{lem}{Lemma}[section]
\newtheorem{thm}[lem]{Theorem}
\newtheorem{cor}[lem]{Corollary}
\newtheorem{pro}[lem]{Proposition}

\newtheorem{con}[lem]{Conjecture}
\newtheorem{cla}{Claim}

\newtheorem{pb}{Problem}
\theoremstyle{plain}

\usepackage{cite}

\def \proofend {\hfill $\blacksquare$}

\begin{document}

\begin{CJK}{GBK}{song}
\title{Sufficient conditions for $(K_2 \cup kK_1)$-free graphs to be Hamilton-connected}
\author{Xiaoqiong Xu\textsuperscript{1}, Shujie Chen\textsuperscript{1}, Fengming Dong\textsuperscript{2}, Tao Tian\textsuperscript{1,}\footnote{Corresponding author. E-mails: xiaoqiongxu0512@163.com (X. Xu), shujiechen2024@163.com (S. Chen), fengming.dong@nie.edu.sg and donggraph@163.com (F. Dong), taotian0118@163.com (T. Tian).}\\
	\small  \textsuperscript{1}School of Mathematics and Statistics, Key Laboratory of Analytical Mathematics and\\
	\small Applications (Ministry of Education), Fujian Key Laboratory of Analytical Mathematics and\\
	\small Applications (FJKLAMA), Center for Applied Mathematics of Fujian Province (FJNU),\\
	\small
	Fujian Normal University,
	\small Fuzhou 350117, PR China.\\
	\small
	\textsuperscript{2}National Institute of Education, Nanyang Technological University, Singapore
}
\date{}

\maketitle {\flushleft\bf Abstract}:
The toughness of a non-complete graph $G$, denoted $\tau(G)$, is defined as
\[
\tau(G) = \min\left\{ \frac{|S|}{\omega(G-S)} : S \subseteq V(G),\ \omega(G-S) \geq 2 \right\},
\]
where $\omega(G-S)$ is the number of components of $G - S$.  For a complete graph \( G \), we define \( \tau(G) = \infty \). A graph $G$ is $t$-tough if $\tau(G) \geq t$. For a positive integer $k$, a graph $G$ is $(K_2 \cup kK_1)$-free if it contains no induced subgraph isomorphic to $K_2 \cup kK_1$. Recently, Liu \cite{liu} showed that every $2k$-connected $(K_2 \cup kK_1)$-free graph $G$ 
with $\tau(G) > 1$ is Hamilton-connected. In this paper, we strengthen this result by proving that every $(k+1)$-connected $(K_2 \cup kK_1)$-free graph 
$G$
with $\tau(G) > 1$ and minimum degree $\delta(G) \geq 2k$ is Hamilton-connected.
Moreover, by imposing restrictions to the independence number $\alpha(G)$, we prove that every \(k\)-connected \((K_2 \cup kK_1)\)-free graph $G$ of order $n$ with $2k+1 \leq \alpha(G) < \frac{n}{2}$ and  
\( \delta(G) \geq 2k\) is Hamilton-connected, and that the bounds on $\alpha(G)$ are sharp.

\maketitle {\flushleft\textit{\bf Keywords}}: Toughness, Hamilton-connected, $(K_2 \cup kK_1)$-free graph, Independence number, Minimum degree


\section{Introduction  }\label{sec1}
In this paper, we consider \emph{simple graphs}, which have neither loops nor multiple edges. 
For any graph $G$,
let  $V(G)$ and $E(G)$ be its vertex set and edge set, respectively.
Specifically, a graph \( G \) is \emph{trivial} if \( |V(G)| = 1 \).
For a vertex \( v \in V(G) \), let \( N_{G}(v) \) denote the set of neighbors of \( v \) in \( G \) and let \( d_{G}(v) = |N_{G}(v)| \) denote the \emph{degree} of \( v \) in $G$, respectively. 
For a subset  \( S \subseteq V(G) \), we define \( G\bigl[S\bigr] \) the subgraph of \( G \) induced by \( S \) and denote \( G - S = G\bigl[V(G) \setminus S\bigr] \). 
Define \( N_{G}(S) = \bigcup_{v \in S} N_{G}(v) \setminus S \) the set of neighbors of the vertex set \( S \) in \( G \).  
For a subgraph \(H\) of \(G\), we often use \(H\) to denote its vertex set \(V(H)\) when no confusion arises.

Let \( \delta(G) \) denote the \emph{minimum degree} of \( G \). The \emph{independence number} and the \emph{number of components} of \( G \) are denoted by \( \alpha(G) \) and \( \omega(G) \), respectively. We denote by \( P_n \) and \( K_n \) the \emph{path} and the \emph{complete graph} on \( n \) vertices, respectively. Let $\overline{G}$ denote the \emph{complement} of $G$.
A graph \( G \) is \emph{\(R\)-free} if it contains no induced subgraph isomorphic to \( R \).
For two vertex-disjoint graphs \( G_1 \) and \( G_2 \), their \emph{union} is denoted by \( G_1 \cup G_2 \), and their \emph{join} is denoted by \( G_1 \vee G_2 \). For a positive integer \( k \), \( kG \) denotes the disjoint union of \( k \) copies of \( G \). 
The symbol $[k]$ denotes the set $\{1, 2, \dots, k\}$.

A graph $G$ is called \emph{$k$-connected} if the removal of any set of fewer than $k$ vertices does not disconnect $G$.
A \emph{Hamilton path} (resp.\ \emph{Hamilton cycle}) in a graph \( G \) is a path (resp.\ cycle) that contains every vertex of \( G \). The graph \( G \) is \emph{Hamiltonian} if it contains a Hamilton cycle. Furthermore, a graph \( G \) is \emph{Hamilton-connected} if any two distinct vertices are connected by a Hamilton path.
For other standard graph theory notation and terminology not defined in this paper, 
we refer to \cite{Bd}.

Chv\'{a}tal~\cite{chvatal} initiated the study of Hamilton cycles via the concept of graph toughness in 1973.
The \emph{toughness} of a graph \( G \), denoted \( \tau(G) \), is defined as
\begin{equation}\label{eq01}
\tau(G) = \min \left\{ \frac{|S|}{\omega(G-S)} : S \subseteq V(G),\ \omega(G-S) \geq 2 \right\}
\end{equation} 
if \( G \) is not complete; otherwise, \( \tau(G) = \infty \). For a positive real number \( t \), the graph \( G \) is \emph{\(t\)-tough} if \( \tau(G) \geq t \). 
Furthermore, it is straightforward to show that a non-complete \( t \)-tough graph \( G \) is \( \lceil 2t \rceil \)-connected.
While every Hamiltonian graph is necessarily 1-tough, the converse fails in general. 
Chv\'{a}tal~\cite{chvatal} thus proposed the following conjecture, known as the toughness conjecture.
\begin{con}[Chv\'{a}tal~\cite{chvatal}]
	\label{conj:toughness}
	There exists a constant \( t_0 \) such that every \( t_0 \)-tough graph on at least three vertices is Hamiltonian.
\end{con}

This conjecture has spurred extensive research and has been verified for many graph classes. 
Bauer, Broersma and Veldman~\cite{9/4} 
demonstrated that 
any constant \( t_0 \) satisfying the conjecture must be at least \( \frac{9}{4} \), by constructing non-Hamiltonian \( t \)-tough graphs for all \( t < \frac{9}{4} \). 
Subsequent work has confirmed the conjecture for several families, including planar graphs \cite{planar}, claw-free graphs \cite{claw}, and chordal graphs \cite{chordal,chordal1}. 
In particular, significant progress has been made for \( R \)-free graphs, where it has been established for cases such as \( P_4 \)-free graphs~\cite{jung,p3p1+p22p1+p4}, \( 2P_2 \)-free graphs~\cite{2k21,2k22,2k23}, $( P_3 \cup P_1 )$-free graphs~\cite{p3p1+p22p1+p4}, $( P_3 \cup P_2 )$-free graphs~\cite{p2p3}, $( P_4 \cup P_1 )$-free graphs~\cite{p4p1}, $( 2P_2 \cup P_1 )$-free graphs~\cite{2p2p1}, $( P_3 \cup 2P_1 )$-free graphs~\cite{p32p1}, and $( P_2 \cup 3P_1 )$-free graphs~\cite{p23p1}.
For a comprehensive survey of related results, we refer the reader to~\cite{survey}.

In 2022, Shi and Shan~\cite{shishan} proposed the following conjecture on the Hamiltonicity of $(K_2 \cup kK_1)$-free graphs.

\begin{con}[Shi and Shan~\cite{shishan}]
	\label{conj:shi-shan}
	Let \( k \geq 4 \) be an integer and let \( G \) be a 1-tough \( 2k \)-connected \( (K_2 \cup kK_1) \)-free graph. Then \( G \) is Hamiltonian.
\end{con}

Conjecture \ref{conj:shi-shan} was independently confirmed by Xu, Li and Zhou \cite{xuleyou} and by Ota and Sanka\cite{km} as stated below.

 \begin{thm}[Xu, Li and Zhou~\cite{xuleyou}]
	Let \(k \geq 1\) be an integer and let \(G\) be a 1-tough \((K_2 \cup kK_1)\)-free graph with \(\kappa(G) > \max\{2k - 2, 2\}\). Then \(G\) is Hamiltonian.
\end{thm}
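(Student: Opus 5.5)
Since $G$ is 1-tough and $(K_2\cup kK_1)$-free with $\kappa(G)\ge \max\{2k-1,3\}$, I will argue by contradiction using a longest cycle. The case $k=1$ is immediate: $G$ is then $(K_2\cup K_1)$-free, i.e.\ complete multipartite. A 1-tough complete multipartite graph with at least three vertices has no part larger than half the vertices, so it is Hamiltonian. From now on let $k\ge 2$, suppose $G$ is not Hamiltonian, and let $C$ be a longest cycle with a fixed orientation. Since $G$ is 2-connected, $C$ exists and $V(G)\setminus V(C)\neq\emptyset$. Pick a component $H$ of $G-C$ and let $x_1,\dots,x_m$ be its attachment vertices on $C$, in cyclic order. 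Because $\kappa(G)\ge 2k-1$, we get $m\ge 2k-1$.

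The standard longest-cycle facts apply. The successors $x_i^+$ are pairwise nonadjacent, none of them is adjacent to $H$, and together with any $h\in V(H)$ they form an independent set. The same holds for the predecessors $x_i^-$. Moreover, for $i\ne j$ no crossing pattern such as $x_i^+x_j^+$ or $x_i^-x_j^-$ is an edge. Extra crossing edges can also be excluded by the usual rerouting arguments, for instance those involving $x_i^{++}$.

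Now use the forbidden induced subgraph. Take an edge $hx_1$ with $h\in H$, and add $k$ vertices from $\{x_2^+,\dots,x_m^+\}$ that are nonadjacent to both $h$ and $x_1$. Together with the edge $hx_1$ they would form an induced $K_2\cup kK_1$. We already know the $x_i^+$ miss $h$, so each $x_i^+$ with $i\ge 2$ must be adjacent to $x_1$, except for at most $k-1$ of them. Since $m-1\ge 2k-2\ge k$, this means $x_1$ is adjacent to at least $m-k\ge k-1$ of the vertices $x_i^+$. The same holds with $x_1$ replaced by any $x_j$, and symmetrically for the predecessors. Replacing $h$ by the edge $x_1^+x_1^{++}$ or $x_1x_1^+$ gives further forcing relations.

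The main step is to turn these many forced chords $x_jx_i^+$ into a longer cycle. The plan is to find indices $i<j$ with $x_ix_j^+$ and $x_i^+x_j^{++}$, or with $x_ix_j^+$ and $x_i^-x_j^-$ both edges. Combining such a pair with a path through $H$ between $x_i$ and $x_j$ yields a cycle longer than $C$. The tool is counting: each $x_j$ misses at most $k-1$ of the successors, while $m\ge 2k-1$. This should force two chords in a crossing configuration, which can then be rerouted.

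I expect the hardest case to be $m$ close to $2k-1$, where the counting is tight. There I will invoke 1-toughness. Let $S=\{x_1,\dots,x_m\}$. Then $G-S$ has $H$ as a component, and it separates the segments of $C$ unless chords reconnect them. If the forced adjacency structure fails to give a longer cycle, the segments $C(x_i,x_{i+1})$ must be pairwise nonadjacent apart from the chords through the $x_j$. In that situation $\omega(G-S)\ge m+1>|S|$, which contradicts 1-toughness. This toughness step is what removes the remaining configurations in which every attachment vertex is adjacent to all successors but the chords do not cross usefully.
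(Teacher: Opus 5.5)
\textbf{There is a genuine gap.} Your treatment of $k=1$, the bound $m\ge\kappa(G)\ge 2k-1$, and the observation that each $x_j$ is adjacent to all but at most $k-1$ of the successors $x_i^+$ ($i\ne j$) are correct. However, the proof stops exactly where the real work begins, and both remaining steps are unsupported.

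The forced chords $x_jx_i^+$, even in crossing pairs $x_ix_j^+,x_jx_i^+$, do not by themselves yield a longer cycle. Neither of your two target configurations helps. The second can never occur: $x_i^-x_j^-\in E(G)$ alone already gives the longer cycle $x_iQx_j\overrightarrow{C}x_i^-x_j^-\overleftarrow{C}x_i$, where $Q$ is an $(x_i,x_j)$-path with interior in $H$. The first requires the edge $x_i^+x_j^{++}$, about which counting at the attachment vertices says nothing. Even when that edge is present, it gives a longer cycle by itself only if $x_i$ and $x_j$ have distinct neighbours in $H$.

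The toughness step is also unjustified. Nothing you have proved excludes edges between interior vertices of different segments (e.g.\ $x_i^{++}x_j^{++}$), or connections through other components of $G-V(C)$. So $\omega(G-S)\ge m+1$ for $S=N_C(H)$ does not follow.

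The missing idea is to apply $(K_2\cup kK_1)$-freeness along \emph{every} edge of $C$, relative to the independent set $X=\{h\}\cup N_C(H)^+$, which has $|X|=m+1\ge 2k$. The paper only cites this theorem, but Lemmas~\ref{claim23} and~\ref{lem:equiv-2} run exactly this argument for paths. Let $ab$ be an edge of $C$ with $b=a^+$.
If neither end lies in $N_G(X)$, then $ab$ together with $k$ vertices of $X$ induces $K_2\cup kK_1$.
If both ends lie in $N_G(X)$, Lemma~\ref{1.1}(i) gives each of them at least $m-k+2$ neighbours in $X$. List $x_1,\dots,x_m$ in the order met when traversing $C$ from $b$. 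For $i<j$, the cycle $x_iQx_j\overleftarrow{C}x_i^+a\overleftarrow{C}x_j^+b\overrightarrow{C}x_i$ shows that $ax_i^+$ and $bx_j^+$ are not both edges. Hence every successor-neighbour index of $b$ is at most every successor-neighbour index of $a$. Since at most one of $a,b$ is adjacent to $h$, this forces $2(m-k+2)\le m+2$, i.e.\ $m\le 2k-2$, contradicting $\kappa(G)\ge 2k-1$.

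Therefore $N_G(X)$ contains exactly one end of each edge of $C$, so $|V(C)\setminus N_G(X)|=|V(C)|/2$. A vertex off $C$ has at most one neighbour in $X$, since two neighbours $x_i^+,x_j^+$ give a longer cycle. By Lemma~\ref{1.1}(ii), the set $(V(G)\setminus V(C))\cup(V(C)\setminus N_G(X))$ is independent, of size $n-|V(C)|/2>n/2$. This contradicts 1-toughness, which enters through $\alpha(G)\le n/2$ rather than through the cut $N_C(H)$.
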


 \begin{thm}[Ota and Sanka~\cite{km}]
 Let \(k \geq 2\) be an integer and let \(G\) be a 1-tough \(k\)-connected \((K_2 \cup kK_1)\)-free graph with \(\delta(G) \geq \frac{3(k-1)}{2}\). Then \(G\) is Hamiltonian or the Petersen graph.
\end{thm}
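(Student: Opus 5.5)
This statement is the theorem of Ota and Sanka, quoted from \cite{km} and not reproved in the paper, so I can only sketch how I would prove it. The plan is to argue by contradiction with a longest cycle. Let $G$ be 1-tough, $k$-connected, $(K_2\cup kK_1)$-free, with $\delta(G)\ge \frac{3(k-1)}{2}$, and suppose $G$ is not Hamiltonian. Choose a longest cycle $C$ with a fixed orientation. Since $G$ is 1-tough, it is 2-connected, so $C$ exists. Let $H$ be a component of $G-C$ and let $x\in V(H)$.

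The first step is to describe the attachments of $H$ to $C$. Let $N_C(H)=\{u_1,\dots,u_m\}$, listed in cyclic order. Since $G$ is $k$-connected, $m\ge k$. By the usual longest-cycle arguments:
\begin{itemize}
\item the successor set $X=\{u_1^+,\dots,u_m^+\}$ is independent;
\item no vertex of $X$ has a neighbor in $H$;
\item for $i\ne j$ there is no path from $u_i^+$ to $u_j^+$ internally disjoint from $C\cup H$ (a crossing-chord argument).
\end{itemize}
Together with $x$, this gives an independent set $\{x\}\cup X$ of size at least $k+1$.

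The second step uses the forbidden subgraph. Take any edge $ab$ in which neither endpoint is adjacent to $x$ or to any $u_i^+$. Then $\{a,b\}$ together with $k$ of these independent vertices induces $K_2\cup kK_1$, which is impossible. So every edge of $G$ has an endpoint in $N(x)\cup N(X)$.

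Apply this to edges inside $H$, and to edges on the segments $u_i^+\overrightarrow{C}u_{i+1}$. This should show that $H$ is small, ideally a single vertex, and that each segment of $C$ between consecutive attachment vertices is short.

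The third step counts degrees. Fix $k-1$ vertices of $X$ and $x$. For a vertex $y$ of $X$ outside this set, every edge at $y$ must be dominated by the neighborhoods of the others. Combining this with $\delta(G)\ge \frac{3(k-1)}{2}$ and a Bondy-type count of neighbors of $x$ and of $u_i^+$ along $C$ (no two of them can be adjacent to consecutive positions in a crossing way), I expect to force $|X|=m$ to be small, roughly $m\le k+1$. The numerical threshold $\frac{3}{2}(k-1)$ should come from double counting the $k$ independent vertices against the forbidden $K_2$.

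The main obstacle is the last step. The goal is to use toughness via $S=N_C(H)$: the set $G-S$ contains $H$ and the segments, and 1-toughness requires $\omega(G-S)\le |S|$. If the segments are pairwise nonadjacent apart from $H$, this bound is violated, which gives the contradiction. Equality cases can survive, and the extremal small configuration with $k=3$, $m=3$ and cubic structure is exactly where the Petersen graph appears. So I would finish with a finite case analysis on small $k$ and small $n$, identifying the Petersen graph as the unique exception.
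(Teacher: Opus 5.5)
The paper gives no proof of this statement (it is quoted from \cite{km}), so I am judging your sketch on its own. It has a genuine gap: everything after your second step is expectation rather than argument.

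Your first two steps are sound, and they give more than you claim. $X$ is independent, has no neighbor in $H$, and $|X|=m\ge k$, so an edge of $H$ together with $k$ vertices of $X$ would induce $K_2\cup kK_1$. Hence $G-V(C)$ is independent outright, exactly as in the paper's proof of Theorem~\ref{thm1}.

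After that, the argument does not go through:
\begin{enumerate}[(a)]
\item The hypothesis $\delta(G)\ge\frac{3(k-1)}{2}$ never enters an actual inequality. ``The segments are short'' and ``$m\le k+1$'' are asserted without proof. Even $m\le k+1$ combined with $m=d(x)\ge\frac{3(k-1)}{2}$ would only give $k\le 5$, with $n$ still unbounded.
\item Your toughness step with $S=N_C(H)$ needs the $m$ segments of $C-S$ to lie in different components of $G-S$, and maximality of $C$ does not force this. With $H=\{x\}$, maximality does not forbid a chord $u_i^+u_j^{+2}$: the natural rerouting $u_i\,x\,u_j\overleftarrow{C}u_i^+u_j^{+2}\overrightarrow{C}u_i$ has the same length as $C$. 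Such a chord joins two segments whenever $u_j^{+2}\notin S$, and other vertices of $G-V(C)$ may link segments as well. So there is no reason for $\omega(G-S)>|S|$.
\item A ``finite case analysis on small $n$'' presupposes a bound on $n$ that your sketch never produces. So the Petersen graph is not actually isolated as the only exception.
\end{enumerate}

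What is missing is the right form of the toughness hypothesis and the structural lemma that feeds it. The paper's path analogues (Lemma~\ref{lem:equiv-2} with Lemma~\ref{claim23}) show the mechanism.
\begin{enumerate}[(1)]
\item Use 1-toughness as $\alpha(G)\le\frac n2$, taking as cut set the complement of an independent set.
\item Set $H=\{x\}$ and $Y=\{x\}\cup X$. A vertex $y\notin V(C)\cup\{x\}$ has at most one neighbor in $Y$: if $yu_i^+,yu_j^+\in E(G)$, then $u_i\,x\,u_j\overleftarrow{C}u_i^+\,y\,u_j^+\overrightarrow{C}u_i$ is longer than $C$.
\item Since $|Y|-k=m+1-k\ge 1$, Lemma~\ref{1.1}(ii) makes $\bigl(V(G)\setminus V(C)\bigr)\cup\bigl(V(C)\setminus N_G(Y)\bigr)$ independent. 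Its size is $n-|V(C)\cap N_G(Y)|$.
\item So the proof reduces to showing that $N_G(Y)$ contains at most half of the vertices of $C$, e.g.\ that no edge $ab$ of $C$ has both ends in $N_G(Y)$. That yields an independent set of more than $\frac n2$ vertices, a contradiction.
\end{enumerate}
Step (4) is where the degree bound must be spent. By Lemma~\ref{1.1}(i), both $a$ and $b$ would see at least $m-k+2$ vertices of $Y$, and this has to be played off against crossing configurations as in Lemma~\ref{claim23}(i).

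Step (4) is also exactly what fails for the Petersen graph. There $Y$ is a maximum independent set of size $4$, every other vertex has two neighbors in it, and so $N_G(Y)=V(G)\setminus Y$. Your proposal contains no counterpart of this step.
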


In 2025, Hu, Wang and Shen \cite{fan} established a Fan-type sufficient condition for the Hamiltonicity in \((K_2 \cup kK_1)\)-free graphs.
Recently, Sanka \cite{sanka} proved that Conjecture \ref{conj:shi-shan} holds even for $(k-1)$-connected graphs subject to additional order and minimum degree restrictions.

\begin{thm}[Sanka~\cite{sanka}]
Let $k \geq 4$ be an integer and let $G$ be a 1-tough $(k-1)$-connected $(K_2 \cup kK_1)$-free graph of order $n \geq k^2 + k + 1$ with $\delta(G) \geq k$. Then $G$ is Hamiltonian.
\end{thm}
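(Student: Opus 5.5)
We would follow the standard longest-cycle approach used for the $(K_2\cup kK_1)$-free results of Xu, Li and Zhou~\cite{xuleyou} and of Ota and Sanka~\cite{km}. Suppose $G$ is not Hamiltonian. Since $k\ge 4$, $G$ is at least $3$-connected, so it has a longest cycle $C$, and we give $C$ a fixed orientation. Let $H$ be a component of $G-C$ and let $A=N_C(H)=\{u_1,\dots,u_m\}$, listed in the order of $C$. By $(k-1)$-connectivity, $m\ge k-1$. For $v\in C$ write $v^+$ and $v^-$ for its successor and predecessor on $C$.

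The usual maximality arguments give the following facts.
\begin{enumerate}[(i)]
\item The set $A^+=\{u_1^+,\dots,u_m^+\}$ is independent.
\item No vertex of $A^+$ has a neighbour in $H$.
\item For each $x\in H$, the set $\{x\}\cup A^+$ is independent.
\item Crossing chords are forbidden: if $u_i^+u_j^{+}$ or $u_i^+u_j^{++}$ were an edge, a suitable pattern would give a longer cycle.
\end{enumerate}

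The key lever is that $G$ is $(K_2\cup kK_1)$-free. For every edge $e=yz$, the set of vertices adjacent to neither $y$ nor $z$ contains no independent $k$-set. We would apply this with carefully chosen edges: an edge $xx'$ inside $H$ when $H$ is nontrivial, and an edge $u_iu_i^+$ or $u_i^+u_i^{++}$ of $C$ otherwise. The aim is to show that most of $A^+$, together with vertices of $H$, avoids both ends of the chosen edge, which yields an independent $k$-set and a contradiction.

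We would split into two cases.

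\emph{Case $m\ge k$.} We would pick $k$ vertices of $A^+$ and an edge of $H$ (or $xu_i$ together with extra successor vertices) whose ends are nonadjacent to them. This follows from (ii) and (iv), possibly after choosing $C$ and $H$ extremally, for instance with $|H|$ maximal. If $H=\{x\}$ is trivial, then $d(x)\ge \delta\ge k$ forces $m\ge k$. In that case we would instead use the edge $u_1^+u_1^{++}$ and show that $u_1^{++}$ is nonadjacent to many vertices of $A^+$ by the crossing argument.

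\emph{Case $m=k-1$.} Here $A$ is a cut of size $k-1$. By 1-toughness, $G-A$ has at most $k-1$ components. Since $\delta\ge k$, every vertex of $H$ has a neighbour in $H$ (a vertex of $H$ has at most $k-1$ neighbours in $A$), so $H$ is nontrivial. We would then exploit the order condition $n\ge k^2+k+1$. The $k-1$ vertices of $A$ split $C$ into $k-1$ segments, so by averaging some segment $u_i^+\cdots u_{i+1}^-$, or the component $H$ itself, has more than about $k+1$ vertices. Inside a long segment we would build an independent set from $u_i^+$, vertices of $H$, and alternate vertices on the segment that the longest-cycle argument forbids from being adjacent to $H$. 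Together with an edge of $H$, this gives an induced $K_2\cup kK_1$.

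The main obstacle is the case $m=k-1$. The independent set $\{x\}\cup A^+$ has only $k$ vertices, and we need all of them nonadjacent to both ends of a single edge. This is exactly where connectivity $k-1$ (rather than $2k$) is weak, and where $\delta\ge k$, 1-toughness and the order bound must be combined. I would first try to choose the edge $xx'\subseteq H$ with $x'$ of maximal degree in $H$. Then $\delta\ge k$ gives $x'$ at least one further neighbour in $H$, and a counting argument over the segments of $C$, using $n\ge k^2+k+1$, locates a vertex on $C$ far from $A$ to serve as the missing $k$-th independent vertex.
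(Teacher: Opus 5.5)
The paper only quotes this theorem from Sanka and gives no proof, so your outline has to stand on its own. It has a genuine gap. The only case you actually settle is $m\ge k$ with $H$ nontrivial, where (i) and (ii) make $A^+$ plus an edge of $H$ an induced $K_2\cup kK_1$. The two remaining cases, $H=\{x\}$ with $m\ge k$ and $m=k-1$, carry all the content, and the mechanisms you propose for them fail.

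\textbf{The case $H=\{x\}$.} Your fact (iv) is false for $u_i^+u_j^{++}$. The cycle $u_i\,x\,u_j\overleftarrow{C}u_i^+u_j^{++}\overrightarrow{C}u_i$ loses $u_j^+$ and gains only $x$, so it is no longer than $C$. Likewise $u_1^{++}u_j^+\in E(G)$ only yields a cycle of the same length, so the crossing argument gives no non-adjacency between $u_1^{++}$ and $A^+$. Concretely, take $K_s\vee\overline{K}_{s+1}$ with $s\ge k$ and $2s+1\ge k^2+k+1$.
\begin{itemize}
\item It is $(K_2\cup K_1)$-free, has $\kappa=\delta=s$, and is not Hamiltonian.
\item A longest cycle misses one vertex $x$ with $|N_C(x)|=s\ge k$.
\item There $u_1^{++}$, and indeed every $u_i$, is adjacent to every vertex of $A^+$.
\end{itemize}
This graph violates only 1-toughness, and your argument for this case never invokes toughness. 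So no choice of edge ($u_1^+u_1^{++}$ or $xu_i$) can give a contradiction: any such toughness-free argument would prove this graph Hamiltonian. You need a genuine toughness argument, e.g.\ exhibiting from the independent set $\{x\}\cup A^+$ (via Lemma~\ref{1.1}) a cutset $S$ with $\omega(G-S)>|S|$. Note that the paper's analogue for paths, Lemma~\ref{claim23}(ii), obtains the edge-wise dichotomy only under $\delta\ge 2k$. Its count ends with $d\le 2k-2$, which is no contradiction when only $\delta\ge k$ is available.

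\textbf{The case $m=k-1$.} Non-adjacency to $H$ is automatic for every vertex of $C-A$. What your plan (an edge of $H$, plus $A^+$, plus one more vertex) really needs is a vertex of $G-(V(H)\cup A\cup A^+)$ with no neighbour in $A^+$, and nothing in your sketch provides it. Rerouting through $H$ excludes $u_i^{+j}u_l^+\in E(G)$ only when some $(u_i,u_l)$-path $Q$ through $H$ has at least $j$ internal vertices, because $u_i\,Q\,u_l\overleftarrow{C}u_i^{+j}u_l^+\overrightarrow{C}u_i$ drops $j-1$ vertices of $C$. With $|A|=k-1$ and $\delta\ge k$, a vertex of $H$ is guaranteed only one neighbour inside $H$, so $Q$ may be very short. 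The interior of a long segment obtained by averaging with $n\ge k^2+k+1$ is then uncontrolled: maximality of $C$ forbids neither chords inside a segment nor edges from deep segment vertices to $A^+$. So ``alternate vertices'' need be neither independent nor free of neighbours in $A^+$. The paper's Claim~\ref{2.8} gets exactly this control for $j\le k+1$, but only because $\delta\ge 2k$ and Lemma~\ref{1.2} supply paths with at least $k+1$ vertices in $H$. Moreover, the one consequence of 1-toughness you derive, $\omega(G-A)\le k-1$, is never used afterwards. As written, the proposal is a plan whose two substantive cases are missing.
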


Motivated by these works, this paper studies sufficient conditions for the Hamilton-connectedness of \((K_2 \cup kK_1)\)-free graphs with \(k \geq 1\). 

In 1972, a classical result of Chv\'{a}tal and Erd\H{o}s~\cite{ce} states that every \(k\)-connected \(kP_1\)-free graph is Hamilton-connected. 
It is well known that a Hamilton-connected graph has toughness strictly greater than one. 
In 1978, for $P_4$-free graphs, Jung~\cite{jung} established the equivalence between Hamilton-connectedness and toughness being greater than one.

\begin{thm}[Jung~\cite{jung}] \label{jung}
Let $G$ be a $P_4$-free graph. Then $G$ is Hamilton-connected if and only if $\tau(G) > 1$.
\end{thm}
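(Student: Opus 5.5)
We prove both directions of Theorem~\ref{jung}; the necessity direction is standard. Suppose $G$ is Hamilton-connected and not complete, and let $S$ be a set with $\omega(G-S)\ge 2$. Choose $u,v$ in two different components of $G-S$ and a Hamilton $(u,v)$-path $P$. Between leaving one component of $G-S$ and entering the next, $P$ must pass through a vertex of $S$. The path starts in the component of $u$, ends in the component of $v$, and visits all $\omega(G-S)$ components, so it makes at least $\omega(G-S)-1$ such transitions, each using a distinct vertex of $S$. Now take $u,v$ both in $S$, or one in $S$ and one in a component (possible whenever $|S|\ge1$). The endpoints of $P$ then no longer bound the component sequence, and counting again forces $|S|\ge \omega(G-S)+1>\omega(G-S)$. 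Hence $\tau(G)>1$. If $G$ is complete then $\tau(G)=\infty$ and there is nothing to show.

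For sufficiency we use induction on $|V(G)|$ and the cograph structure of $P_4$-free graphs. Every $P_4$-free graph on at least two vertices is either disconnected or the join of two smaller $P_4$-free graphs. Since $\tau(G)>1$ forces $G$ to be connected (indeed $2$-connected when $G$ is non-complete), we may write $G=A\vee B$ with $|A|\le|B|$. Here $A$ and $B$ are themselves cographs. Taking $S=V(A)$ gives $|A|>\omega(B)$. More generally, any $T\subseteq V(B)$ with $\omega(B-T)\ge 2$ yields the inequality $|A|+|T|>\omega(B-T)$.

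The key tool will be a path-cover lemma. Define $\pi(B)$ to be the minimum number of vertex-disjoint paths covering $B$. For cographs there is a clean formula:
\[
\pi(B)=\max\bigl(1,\ \max_T\,(\omega(B-T)-|T|)\bigr).
\]
This formula can itself be proved by induction through the join/union decomposition. Join $B_1\vee B_2$ with $\pi(B_1)\ge|B_2|$: cover $B_1$ with $\pi(B_1)$ paths, then use the vertices of $B_2$ to link consecutive paths, each linking vertex reducing the count by one. Disjoint union: add the counts. The toughness hypothesis then gives $\pi(B)<|A|$. We strengthen this to prescribed endpoints.

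To find a Hamilton $(x,y)$-path, we split into cases according to where $x$ and $y$ lie.
\begin{enumerate}
\item[(i)] $x,y\in A$: we need to cover $B$ by exactly $|A|-1$ paths, then alternate vertices of $A$ with these paths, running from $x$ to $y$. Covering $A$ internally also allows fewer paths of $B$ to be used. Concretely, we need $A$ and $B$ to admit path covers with counts $p_A$ and $p_B$ such that the paths can be interleaved, which amounts to $|p_A-p_B|$ being consistent with the endpoint positions.
\item[(ii)] $x\in A$, $y\in B$: we need $p_A=p_B$, with the endpoints of the interleaved path prescribed.
\item[(iii)] $x,y\in B$: we need $p_B=p_A+1$, with $x$ and $y$ endpoints of distinct paths of the cover of $B$.
\end{enumerate}
Path counts can be increased freely up to the number of vertices by splitting paths. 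So in each case the task reduces to showing that the minimum achievable counts with the prescribed endpoints are small enough. Prescribing $x$ (resp.\ $y$) as a path endpoint costs at most one extra path.

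We expect the main obstacle to be case (iii), and case (i) when $|A|$ is close to $\pi(B)+1$. There we need a refined bound: the minimum number of paths covering $B$ with $x$ and $y$ as endpoints of different paths is at most $\max_T(\omega(B-T)-|T|)+1$, where the maximum runs over $T$ avoiding $x,y$ and counts components that do not contain $x$ or $y$ appropriately. We would prove this via the same cograph induction. We then compare it against $|A|$ using $\tau(G)>1$ applied to the separator $S=V(A)\cup T$, and to $S=V(A)\cup T\cup\{x\}$ in the cases where removing $x$ increases the component count. The strict inequality $\tau>1$ (rather than $\ge1$) is exactly what supplies the extra unit needed for prescribed endpoints.
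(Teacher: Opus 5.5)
\textbf{Context.} The paper does not prove this statement; it only cites Jung, so your proposal has to stand on its own.

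\textbf{Necessity.} This part is fine in substance. Only the choice $u,v\in S$ (available when $|S|\ge 2$) yields $\omega(G-S)\le |S|-1$. With one endpoint in $S$ you only get $\omega(G-S)\le |S|$, which merely rules out $|S|=1$.

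\textbf{Framework.} The join/path-cover setup is correct, and so is your formula for $\pi$ on cographs. In the join step you also need the subcase $\pi(B_1)\le|B_2|$, $\pi(B_2)\le|B_1|$: there the intervals $[\pi(B_i),|B_i|]$ meet and you get a single path.

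\textbf{First failing step.} The claim ``the toughness hypothesis then gives $\pi(B)<|A|$'' is false when $|A|=1$, since $\pi(B)\ge 1$. Toughness only controls the terms with $\omega(B-T)\ge 2$. The terms with $\omega(B-T)\le 1$ are at most $1$, and $1\le |A|-1$ holds only if $|A|\ge 2$.

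So you must choose the split with $2\le |A|\le |B|$. This is always possible when $G$ is non-complete and $\tau(G)>1$:
\begin{enumerate}
\item[(a)] If $G$ has exactly two co-components and one is a single vertex $a$, the other is a cograph on at least two vertices with connected complement, hence disconnected. Then $\{a\}$ is a cutset and $\tau(G)\le\frac12$, a contradiction.
\item[(b)] Otherwise, put one co-component with at least two vertices on one side and everything else on the other.
\end{enumerate}

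\textbf{Second problem: the argument stops at the crux.} You stop exactly where you place the main difficulty. The ``refined bound'' for prescribed endpoints is neither precisely stated nor proved, so as written the proof is incomplete.

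However, no refined bound is needed. Once $2\le|A|\le|B|$, toughness applied to the cutsets $V(A)\cup T$ and $V(B)\cup T$ gives $\pi(B)\le|A|-1$ and $\pi(A)\le|B|-1$. Your own two observations then close every case with $A$ split into singletons. The observations are that splitting only raises the path count, and that forcing $x$ and $y$ to be ends of distinct paths costs at most two cuts.
\begin{enumerate}
\item[(i)] $x,y\in A$: cut $B$ into exactly $|A|-1$ paths $R_j$ and alternate $x,R_1,a_2,\dots,R_{|A|-1},y$.
\item[(ii)] $x\in A$, $y\in B$: cut $B$ into exactly $|A|$ paths with $y$ an end. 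This is possible since $\pi(B)+1\le|A|\le|B|$.
\item[(iii)] $x,y\in B$: if $|B|\ge|A|+1$, cut $B$ into exactly $|A|+1$ paths with $x,y$ ends of distinct paths. This is possible since $\pi(B)+2\le|A|+1\le|B|$. If $|B|=|A|$, this is case (i) with $A$ and $B$ exchanged, using $\pi(A)\le|B|-1$.
\end{enumerate}

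So the strictness of $\tau(G)>1$ enters only through $\pi(B)\le|A|-1$ together with $|A|\ge 2$, not as an extra unit for the endpoints. With these two repairs your route becomes a complete proof.
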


Theorem~\ref{jung} was later extended by Zheng, Broersma and Wang \cite{zbw} to graphs forbidding an induced subgraph $P_3 \cup K_1$ or $K_2 \cup 2K_1$.
 \begin{thm}[Zheng, Broersma and Wang {\cite{zbw}}] \label{zbw}
 	Let $R$ be 
 	either 
 	$P_3 \cup K_1$ or $K_2 \cup 2K_1$.
 	Then every $R$-free graph $G$ with $\tau(G) > 1$ on at least three vertices is Hamilton-connected.
 \end{thm}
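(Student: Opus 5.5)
The plan is to argue by contradiction using a longest path between the two prescribed vertices. First, $\tau(G)>1$ forces $G$ to be $3$-connected, since a non-complete $t$-tough graph is $\lceil 2t\rceil$-connected. If $G$ is complete, there is nothing to prove. Otherwise, fix distinct $x,y\in V(G)$. Let $P=x\,P\,y$ be an $x$--$y$ path with $|V(P)|$ maximum, and orient $P$ from $x$ to $y$; write $v^+$ and $v^-$ for the successor and predecessor of $v$ on $P$. Suppose $V(P)\neq V(G)$ and let $H$ be a component of $G-V(P)$. Let $v_1,\dots,v_m$ be the vertices of $N(H)$, all of which lie on $P$, listed in order along $P$. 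Since $G$ is $3$-connected, $m\ge 3$.

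Next come the standard exchange facts, which follow from the maximality of $P$. No $v_i^+$ (for $v_i\ne y$) lies in $N(H)$. The vertices $v_i^+$ are pairwise nonadjacent, since otherwise we could reroute through $H$ and lengthen $P$. The same holds for the predecessors $v_i^-$. These facts give the first use of the forbidden subgraph. Suppose $|V(H)|\ge 2$, and take an edge $uw$ of $H$ together with two successors $v_i^+,v_j^+$, chosen among those not equal to $y$; since $m\ge 3$, two such successors exist. Then $\{u,w,v_i^+,v_j^+\}$ induces $K_2\cup 2K_1$. In the $P_3\cup K_1$ case we instead take a path $u\,w\,w'$ in $H$ or the path $v_i\,u\,v_j$, and add a suitable successor as the isolated vertex. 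We conclude that $H=\{u\}$ is a single vertex, and in fact every component of $G-V(P)$ is a single vertex.

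The toughness count then goes as follows. Let $S=N(u)=\{v_1,\dots,v_m\}$. Removing $S$ from $G$ leaves $u$ isolated. It cuts $P$ into the segments strictly between consecutive $v_i$, plus the end segments containing $x$ and $y$ when these are not in $S$. If we can show that these segments lie in pairwise distinct components of $G-S$, then $\omega(G-S)\ge m+1$. Strictly, if $x,y\in S$ there are only $m-1$ inner segments, but in that case the other vertices off $P$ also contribute components. This gives $|S|/\omega(G-S)\le m/(m+1)<1$, contradicting $\tau(G)>1$.

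The main obstacle is exactly this separation claim: no edge joins two different segments, and no vertex off $P$ other than $u$ links two segments through a path avoiding $S$. Maximality alone forbids only the crossing edges $v_i^+v_j^+$ and $v_i^-v_j^-$. A general edge $ab$ with $a\in(v_i,v_{i+1})$ and $b\in(v_j,v_{j+1})$ must be ruled out using the forbidden subgraph. I would first choose such an edge with $a$ and $b$ as close as possible to $v_i^+$ and $v_j^+$. Then I would examine the quadruple $\{a,b,u,v_k^+\}$ for a third index $k$, which exists because $m\ge 3$. This quadruple is an induced $K_2\cup 2K_1$ unless $a$ or $b$ is adjacent to $v_k^+$. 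Each such adjacency then yields a longer $x$--$y$ path by the usual "reverse a segment and detour through $u$" surgery. The $P_3\cup K_1$ case runs in parallel, using the induced path $a\,b\,b^{+}$ or $v_i\,u\,v_j$ plus an isolated successor. Carrying out this case analysis cleanly, including the boundary cases where some $v_i$ equals $x$ or $y$, is where I expect most of the work.
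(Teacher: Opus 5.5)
The paper only quotes this theorem from Zheng, Broersma and Wang and gives no proof, so your plan has to stand on its own. Your reduction to $|V(H)|=1$ in the $K_2\cup 2K_1$ case is fine. The plan breaks at the step you flag: the segments of $P-N(u)$ need not lie in distinct components of $G-N(u)$, and this does not follow from maximality of $P$, $3$-connectivity and $R$-freeness.

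The rerouting surgery only handles edges joining two successors (or two predecessors). If $k<j$ and $v_j^{+2}v_k^+\in E(G)$, then $x\overrightarrow{P}v_k\,u\,v_j\overleftarrow{P}v_k^+\,v_j^{+2}\overrightarrow{P}y$ just trades $v_j^+$ for $u$ and is no longer than $P$.

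Worse, $K_2\cup 2K_1$-freeness actually forces such edges. The set $X=\{u\}\cup N_P(u)^+$ is independent. Any $z\notin N(u)\cup X$ with a neighbour $s\in X$ must be adjacent to every $s'\in X\setminus\{u\}$, since otherwise $\{z,s,u,s'\}$ induces $K_2\cup 2K_1$. The vertex $v_j^{+2}$, whenever it exists and lies outside $N(u)$, is such a $z$. So a single segment with two vertices glues all successor-containing segments into one component. Your claim is therefore really ``every inner segment is a single vertex'', and nothing in your argument gives that.

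Here is a concrete counterexample to the separation claim. Let $A=\{a_1,\dots,a_6\}$ be a clique, join five independent vertices $b_1,\dots,b_5$ to all of $A$, and add $u$ with $N(u)=\{a_1,a_3,a_5\}$.
\begin{itemize}
\item The graph is $3$-connected and $K_2\cup 2K_1$-free.
\item $P=a_1b_1a_2b_2\cdots a_5b_5a_6$ is a longest $(a_1,a_6)$-path: a Hamilton one would need six pairwise non-consecutive positions among the ten interior ones.
\item $G-V(P)=\{u\}$, yet $G-N(u)$ has only two components, because $a_2,a_4,a_6$ form a clique adjacent to every $b_i$.
\end{itemize}
Here the violation of $\tau(G)>1$ is witnessed by $S=A$ (six vertices, six components), not by $N(u)$. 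So the contradiction must come from a different cut set. One natural candidate is a set built from $N_G(X)$, together with a proof that the vertices of $P$ alternate in and out of $N_G(X)$. That is the analogue of Lemma~\ref{claim23}(ii), but in the paper it relies on $\delta(G)\ge 2k$, so it is not available for free here. This is the missing idea.

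The $P_3\cup K_1$ branch has a gap even earlier. The path $v_iuv_j$ need not be induced, and a successor may be adjacent to $v_i$ or $v_j$. Also, this argument never uses $|V(H)|\ge 2$, so if it worked it would exclude $H=\{u\}$ as well. For a concrete obstruction, take $K_3\vee 3K_2$ with $x,y$ two vertices of the $K_3$. Every longest $(x,y)$-path misses a whole $K_2$, yet the graph is $3$-connected and $P_3\cup K_1$-free. So a nontrivial complete component $H$ cannot be excluded without using toughness.

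A minor point on the count: when $x,y\in N(u)$ you only get $m$ components, not $m+1$, since there need not be other vertices off $P$. Having $\omega(G-S)\ge m$ would still suffice.
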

 Most recently, Liu~\cite{liu} obtained the following theorem concerning the specific forbidden subgraph $K_2 \cup kK_1$.
 
\begin{thm}[Liu {\cite{liu}}] \label{liu}
	Let $k$ be a positive integer. Then every $2k$-connected $(K_2 \cup kK_1)$-free 
	graph $G$ with $\tau(G) > 1$ is Hamilton-connected.
\end{thm}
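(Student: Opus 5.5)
We prove Theorem~\ref{liu} by contradiction, using a longest-path argument. Suppose $G$ is $2k$-connected, $(K_2\cup kK_1)$-free, and $\tau(G)>1$, but some pair $x,y$ is joined by no Hamilton path. Since $G$ is connected, $x$ and $y$ are joined by some path. Among all $(x,y)$-paths choose $P$ of maximum length, and orient it from $x$ to $y$. For $v\in V(P)$ write $v^+$ and $v^-$ for its successor and predecessor on $P$. Because $P$ is not Hamiltonian, $G-V(P)$ has a component $H$. Let $A=N_G(H)\subseteq V(P)$. By $2k$-connectivity we have $|A|\ge 2k$, unless $A$ is all of $V(P)$; that case is handled directly, since $H$ could then be inserted between two consecutive vertices of $P$.

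\textbf{Step 1 (standard exchange facts).} Maximality of $P$ gives the usual facts for distinct $u,v\in A\setminus\{y\}$:
\begin{enumerate}[(a)]
\item $u^+\notin V(H)$ and no vertex of $H$ is adjacent to $u^+$;
\item $u^+v^+\notin E(G)$, because otherwise we could reroute the path through $H$ and the chord $u^+v^+$ to get a longer $(x,y)$-path;
\item the analogous statements for predecessors in $A\setminus\{x\}$.
\end{enumerate}
Consequently $A^+=\{u^+: u\in A\setminus\{y\}\}$ together with any vertex $h\in H$ is an independent set of size at least $2k$. The same holds for $A^-$. This is exactly where the endpoints $x,y$ cost us. Compared with the Hamiltonian-cycle setting, up to two vertices of $A$ may be lost, which is why $2k$ rather than $2k-2$ connectivity appears.

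\textbf{Step 2 (using the forbidden subgraph).} Since $|A^+\cup\{h\}|\ge 2k+1>k+1$, any edge $e$ of $G$ must have a neighbour in all but at most $k-1$ vertices of $A^+\cup\{h\}$. Otherwise $e$, together with $k$ vertices of $A^+\cup\{h\}$ non-adjacent to both its ends, induces $K_2\cup kK_1$. Take $h$ with a neighbour $h'$ in $H$ if $|H|\ge2$. Since $h$ and $h'$ have no neighbours in $A^+$, the edge $hh'$ would have at least $2k-1\ge k$ non-neighbours in $A^+$, a contradiction. Hence $H$ is a single vertex $h$, and every component of $G-V(P)$ is trivial. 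Next take an edge $u u^+$ of $P$ with $u\in A$. The vertex $u^+$ has no neighbours in $A^+$, so $u$ must be adjacent to all but at most $k-1$ vertices of $A^+\setminus\{u^+\}$. This produces many chords of the form $u w^+$ with $u,w\in A$. Combined with the edges $hu$ and $hw$, these chords give crossing-type reroutings. The aim is to show that for some pair $u,w$ the configuration $u\,w^+$ together with $h u^+$-type swaps yields a longer path, or else to pin down the structure completely.

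\textbf{Step 3 (toughness contradiction).} If no rerouting is available, I expect the analysis to force the following: all components outside $P$ are single vertices, and $A^+\cup\{h\}$ behaves like an independent set whose removal-complement $S$ is small. Concretely, set $S=A$, or $S=A\cup\{x\}$ or $\{y\}$, and count components of $G-S$. These are the single vertices outside $P$ together with the segments of $P$ between consecutive vertices of $A$. The goal is to show $\omega(G-S)\ge |S|$, which contradicts $\tau(G)>1$.

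\textbf{Main obstacle.} The main obstacle is Step 2–3: showing that the abundant chords forced by $(K_2\cup kK_1)$-freeness actually yield a longer $(x,y)$-path. Path orientation makes cycle-style crossing arguments fail near $x$ and $y$. I would try first to pick $u,w\in A$ adjacent in the order along $P$ with $uw^+\in E$, which gives the path $x\dots u\,w^+\dots y$ plus $u^+\dots w\,h$ in reverse. Then I would use the degree bound at $u$ to extend this pair repeatedly until a longer path appears.
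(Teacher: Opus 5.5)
Your Step 1, and the part of Step 2 that rules out a component of $G-V(P)$ containing an edge, are correct and coincide with the paper's first step. In the paper this theorem follows from Theorem~\ref{thm1}, since $2k$-connectivity gives both $\kappa(G)\ge k+1$ and $\delta(G)\ge 2k$.

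The genuine gap is everything after that. The case $H=\{h\}$ is only stated as an aim (``I expect'', ``the goal is to show''), yet it is the heart of the proof. It also cannot be closed by rerouting alone: in $K_{2k,2k}$ with $x,y$ in the same part, a longest $(x,y)$-path misses exactly one vertex, so this configuration really arises and $\tau(G)>1$ must be used there. Your proposed cut is the wrong one. Since $|A|\ge 2k\ge k+1$, the inequality $\omega(G-A)\ge|A|$ together with $(K_2\cup kK_1)$-freeness would force $G-A$ to be edgeless, i.e.\ $N_G(h)$ would have to meet every edge of $P$. Nothing in your analysis gives that much control over $N_G(h)$, and adding $x$ or $y$ to $S$ does not change this.

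The missing idea is to work with the whole independent set $X=\{h\}\cup A^+$ and prove an alternation property along $P$ (Lemma~\ref{claim23}). Your remark that $u$ sees all but at most $k-1$ vertices of $A^+$ is the right seed, but it has to be applied to every vertex. By Lemma~\ref{1.1}(i), a vertex with a neighbour in $X$ has at least $|X|-k+1$ neighbours in $X$. The key claim is that every edge $ab$ of $P$ (with $b=a^+$) has \emph{exactly} one end in $N_G(X)$.
\begin{itemize}
\item ``At least one'' is immediate from freeness.
\item For ``at most one'', write $A=\{x_1,\dots,x_d\}$ in order along $P$. Let $r$ be the largest index with $ax_r^+\in E(G)$ and $l$ the smallest with $bx_l^+\in E(G)$. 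A case analysis of exchanges, according to the position of $ab$ relative to $x_l^+$ and $x_r$, shows $r\le l$.
\item The bounds $|N_G(a)\cap X|,|N_G(b)\cap X|\ge |X|-k+1$ then give $d\le 2k-1$, contradicting $d=d_G(h)\ge 2k$.
\end{itemize}
So $2k$ enters as a degree bound on $h$; the step you did prove needs only $(k+1)$-connectivity.

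To finish, note that no vertex of $G-V(P)$ other than $h$ has two neighbours in $X$ (otherwise one reroutes). With this, Lemma~\ref{1.1}(ii) shows that $(V(G)\setminus V(P))\cup(V(P)\setminus N_G(X))$ is independent of size at least $n/2$. Equivalently, the right cut is $S=V(P)\cap N_G(X)$, which is essentially every other vertex of $P$ and contains $A$. For this $S$ one has $\omega(G-S)\ge |S|$, contradicting $\tau(G)>1$.
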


In this paper, we demonstrate the connectivity condition in Theorem \ref{liu} can be significantly reduced by imposing a minimum degree condition. Our main results are as follows.
\begin{thm} \label{thm1}
	Let $k$ be a positive integer.
Then every  \((k+1)\)-connected 
\((K_2 \cup kK_1)\)-free graph
$G$ with $\tau(G) > 1$ 
and  
\( \delta(G) \geq 2k\) is Hamilton-connected.
\end{thm}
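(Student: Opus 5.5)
We argue by contradiction, using a longest-path argument between two fixed vertices. Assume $G$ satisfies the hypotheses but is not Hamilton-connected. Choose distinct vertices $x,y$ with no Hamilton $(x,y)$-path. Among all $(x,y)$-paths, choose $P$ with $|V(P)|$ maximum; $P$ exists since $G$ is connected. Orient $P$ from $x$ to $y$, and write $v^+$ and $v^-$ for the successor and predecessor of $v$ on $P$. Since $P$ is not Hamiltonian, $G-V(P)$ has a component $H$. Let $N_P(H)=\{u_1,\dots,u_m\}$, listed in order along $P$. Then $m\ge k+1$ by $(k+1)$-connectivity, because $V(P)\setminus\{x,y\}$-type separations of $H$ from the rest must use at least $k+1$ vertices.

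The first step is the standard crossing lemmas for a longest path. The vertices $u_i^+$ (excluding $y$ if it arises) have no neighbours in $H$. They are pairwise nonadjacent. No $u_i^+u_j^+$ pattern can be combined with an edge $u_i^+ w$ and $w^+$ adjacency in the usual "hopping" configurations, since otherwise we could reroute $P$ through $H$ and obtain a longer $(x,y)$-path. The same holds for the predecessors $u_i^-$.

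The second step exploits the forbidden subgraph. Pick $h\in H$ adjacent to $u_1$. The edge $hu_1$ together with $k$ of the vertices $u_i^+$, chosen nonadjacent to both $h$ and $u_1$, would induce $K_2\cup kK_1$. Hence at most $k-1$ such $u_i^+$ are non-neighbours of $u_1$. This forces $m$ to be small, roughly $m\le k$ or $m=k+1$ with a rigid structure. If $m\ge k+1$ does give $k$ independent successors, the only escape is that $u_1$ (or $h$) is adjacent to many of them, and we derive a longer path. Using a neighbour of $h$ inside $H$ when $|H|\ge 2$, or the degree bound when $|H|=1$, we conclude that every vertex of $H$ has many neighbours on $P$. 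In particular, if $H=\{h\}$ then $d(h)\ge 2k$, so $m\ge 2k$. Then among $u_2^+,\dots,u_m^+$ we find $k$ vertices that together with the edge $hu_1$ induce $K_2\cup kK_1$, unless $u_1$ is adjacent to many successors. That adjacency creates a rerouting, and we handle it by symmetrically using $u_m$ and the predecessors. For $|H|\ge2$, we use an edge inside $H$ as the $K_2$: the successors $u_i^+$ are independent and anticomplete to $H$, so $m\ge k+1$ already gives a contradiction, once we ensure that at least $k$ of the $u_i^+$ differ from $y$.

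The third step handles the residual case, where $H$ is a single vertex $h$ and the successors are not independent from $u_1$; this is where toughness $\tau(G)>1$ enters. We take $S=N_P(h)$ and count components of $G-S$. The vertex $h$ and the segments of $P$ between consecutive $u_i$ give at least $|S|$ components once we show no edges join distinct segments; the longest-path crossing arguments provide this. A little extra care at the endpoints $x,y$ adds one further component. Then $\omega(G-S)\ge|S|$ contradicts $\tau(G)>1$.

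The main obstacle is the endpoint effect. The vertices $x,y$ are fixed, and $y$ has no successor, so some $u_i^+$ may coincide with $y$ or fail to exist. This is exactly why $(k+1)$-connectivity rather than $k$-connectivity is needed. I would first try to treat the two orientations simultaneously, using successors and predecessors, so that at least $k$ usable independent vertices always remain. I would then check the toughness count carefully in the case $u_m=y^-$.
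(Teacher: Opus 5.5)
Your treatment of the case where the component $H$ of $G-V(P)$ contains an edge is correct and is the paper's argument. There $(k+1)$-connectivity gives $m\ge k+1$, and only $u_m=y$ can lack a successor, so at least $k$ successors $u_i^+$ exist. These are independent and anticomplete to $H$, and an edge of $H$ completes an induced $K_2\cup kK_1$.

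The gap is the case $H=\{h\}$, which carries essentially all of the difficulty.

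\textbf{Step 2.} You have not found a contradiction here. With $X=\{h\}\cup N_P(h)^+$ independent, $(K_2\cup kK_1)$-freeness \emph{forces} every vertex with a neighbour in $X$, in particular $u_1$, to have at least $|X|-k+1$ neighbours in $X$ (Lemma~\ref{1.1}(i)). So $u_1$ being adjacent to many $u_i^+$ is expected, not contradictory. Moreover, an edge $u_1u_i^+$ does not by itself yield a longer $(x,y)$-path. In $P$ plus $h$ plus this single chord, the degree-two vertices $h$ and $u_1^+$ would both have to be joined to $u_1$ on the path. That leaves no edge for $u_1^-u_1$, which cuts off $x\overrightarrow{P}u_1^-$, or it is impossible if $u_1=x$. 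So the promised rerouting, and its symmetric version via $u_m$ and predecessors, is not available.

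\textbf{Step 3.} Your claim that no edge joins two different segments of $P-N_P(h)$ does not follow from crossing arguments. Those only forbid specific patterns such as $u_i^+u_j^+$. For example, an edge $u_i^{+2}u_j^+$ only gives another $(x,y)$-path of the same order: it gains $h$ but loses $u_i^+$. Worse, the claim is false whenever some segment has a second vertex $u_i^{+2}\notin N_P(h)$. That vertex is adjacent to $u_i^+\in X$, hence to at least $|X|-k+1\ge k+1$ vertices of $X$, all of them successors since it is not adjacent to $h$. So it is joined to other segments, $S=N_P(h)$ is the wrong separator, and $\omega(G-S)\ge|S|$ cannot be obtained this way.

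\textbf{The missing idea} is the paper's alternation property, Lemma~\ref{claim23}(ii): for $X=\{h\}\cup N_P(h)^+$, every edge of $P$ has exactly one endpoint in $N_G(X)$. This is where $\delta(G)\ge 2k$ genuinely enters. If both ends $a,b$ of an edge of $P$ had neighbours in $X$, each would have at least $|X|-k+1$ of them. An analysis of the extreme indices of these neighbours along $P$ then produces either a longer path or an induced $K_2\cup kK_1$, or forces $d(h)\le 2k-1$.

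Given alternation, the paper finishes as follows. $W=V(P)\setminus N_G(X)$ contains about half of $V(P)$, and every vertex off $P$ has at most one neighbour in $X$. Lemma~\ref{1.1}(ii) then makes $W\cup(V(G)\setminus V(P))$ independent of size at least $n/2$. This contradicts $\alpha(G)<n/2$, which is how the paper uses $\tau(G)>1$ (Lemma~\ref{lem:yibian}). In toughness terms, the separator that works is $V(P)\cap N_G(X)$, whose removal leaves only isolated vertices, not $N_P(h)$.

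Your proposal uses $\delta(G)\ge 2k$ only to get $m\ge 2k$ and has no substitute for this alternation step. So the single-vertex case, and with it the theorem, remains unproved.
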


 Lemma~\ref{lem:equiv}
 shows that if
 $G$ is $(k+1)$-connected
 and
 $(K_2 \cup kK_1)$-free,
 then
 $\alpha(G) < \frac{n}{2}$ if and only if $\tau(G) > 1$.

Since a non-complete \(\frac{k+1}{2}\)-tough graph is $(k+1)$-connected, 
we obtain
the following corollary immediately.

\begin{cor}
	For any positive integer \( k \geq 2 \), every \(\frac{k+1}{2}\)-tough \((K_2 \cup kK_1)\)-free 
	graph $G$
	with \( \delta(G) \geq 2k\) is  
	Hamilton-connected.
\end{cor}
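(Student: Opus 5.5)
The corollary follows directly from Theorem~\ref{thm1} once we show that $G$ is $(k+1)$-connected and satisfies $\tau(G)>1$. The remaining hypotheses, $(K_2\cup kK_1)$-freeness and $\delta(G)\ge 2k$, are assumed outright.

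\textbf{Step 1: the complete case.} If $G$ is complete, then $\delta(G)\ge 2k$ gives $|V(G)|\ge 2k+1\ge 3$. A complete graph on at least three vertices is trivially Hamilton-connected, since the two prescribed ends can be joined through the remaining vertices in any order. So we may assume $G$ is non-complete.

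\textbf{Step 2: toughness exceeds one.} For non-complete $G$, we have $\tau(G)\ge \frac{k+1}{2}\ge \frac{3}{2}>1$, using $k\ge 2$. This is exactly where the hypothesis $k\ge 2$ enters: for $k=1$ we would only get $\tau(G)\ge 1$.

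\textbf{Step 3: connectivity.} We use the standard fact recalled in the introduction: a non-complete $t$-tough graph is $\lceil 2t\rceil$-connected. Its proof is short. Let $S$ be a minimum vertex cut. Then $\omega(G-S)\ge 2$, so
\[
|S|\ \ge\ t\,\omega(G-S)\ \ge\ 2t,
\]
and since $|S|$ is an integer, $|S|\ge\lceil 2t\rceil$. With $t=\frac{k+1}{2}$ this gives $\kappa(G)\ge k+1$.

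\textbf{Conclusion.} All hypotheses of Theorem~\ref{thm1} hold, so $G$ is Hamilton-connected. There is no real obstacle. The only points needing care are treating the complete case separately, because of the convention $\tau=\infty$, and using $k\ge 2$ to get the strict inequality $\tau(G)>1$.
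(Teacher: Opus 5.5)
Your proposal is correct and matches the paper, which derives the corollary directly from Theorem~\ref{thm1}. It uses the same two facts you do: a non-complete $\frac{k+1}{2}$-tough graph is $(k+1)$-connected, and $\frac{k+1}{2}>1$ when $k\ge 2$. Your separate treatment of the complete case is a harmless extra; Theorem~\ref{thm1} already covers $K_n$, since $\kappa(K_n)=n-1\ge 2k\ge k+1$.
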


Furthermore, if $2k+1 \leq \alpha(G) < \frac{n}{2}$, the connectivity requirement for the graph in Theorem \ref{thm1} can be relaxed from \((k+1)\)-connected to \(k\)-connected.
\begin{thm} \label{thm2}
	Let $k$ be a positive integer.
	Then every \(k\)-connected \((K_2 \cup kK_1)\)-free graph
	$G$ of order $n$ with $2k+1 \leq \alpha(G) < \frac{n}{2}$ and  
	\( \delta(G) \geq 2k\)
	 is Hamilton-connected.
\end{thm}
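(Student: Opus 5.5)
We argue by contradiction with a longest-path argument. Suppose $G$ satisfies the hypotheses of Theorem~\ref{thm2} but is not Hamilton-connected. Then there are vertices $x,y$ with no Hamilton $(x,y)$-path. Among all $(x,y)$-paths, choose $P$ of maximum length, and subject to that, choose it so that the component $H$ of $G-V(P)$ is as small as possible. Orient $P$ from $x$ to $y$, and for $v\in V(P)$ write $v^+$ for its successor.

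First we record the standard exchange facts. Let $u\in V(H)$ and let $N=N_P(H)$ be the set of vertices of $P$ with a neighbour in $H$.
\begin{enumerate}[(i)]
\item For distinct $a,b\in N\setminus\{y\}$, we have $a^+b^+\notin E(G)$, since otherwise $P$ could be rerouted through $H$ into a longer path.
\item The set $\{a^+: a\in N\setminus\{y\}\}\cup\{u\}$ is independent.
\item Since $G$ is $k$-connected and $G-V(P)\neq\emptyset$, we get $|N|\ge k$ (or $V(P)=N$ is small, which is handled separately).
\end{enumerate}
Thus we obtain an independent set $I=\{u\}\cup N^+$ of size at least $k+1$.

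Next we use $(K_2\cup kK_1)$-freeness. Any edge $e$ whose two endpoints are non-adjacent to $k$ vertices of an independent set yields a forbidden induced $K_2\cup kK_1$. We apply this to edges inside $H$ (or edges $uw$ with $w\in H$) and to edges $a^+a^{++}$ along $P$. Combined with $\delta(G)\ge 2k$, which forces $u$ to have many neighbours on $P$ or in $H$, this shows that either $|N|\ge 2k$ or $H$ is a single vertex with all of its neighbours on $P$. In the latter case, $\delta\ge 2k$ again gives $|N|\ge 2k$. Counting with a crossing argument over pairs $a^+,b^+$, every vertex outside $I$ must be adjacent to all but at most $k-1$ vertices of $I$. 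This is the point where the hypothesis $\alpha(G)\ge 2k+1$ enters. We take a maximum independent set $A$ with $|A|\ge 2k+1$. By $(K_2\cup kK_1)$-freeness, every edge of $G-A$ has an endpoint adjacent to all but at most $k-1$ vertices of $A$. From this we derive that $G$ has a very rigid structure: $G-A$ is "almost complete" to $A$, which in turn makes it easy to complete $P$ into a Hamilton path, unless the counting gives $|A|\ge n/2$.

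Finally, we make the contradiction with $\alpha(G)<n/2$ explicit. From the structure of $P$ we build an independent set of size at least $\lceil n/2\rceil$. Alternatively, we show that $|V(P)\setminus N^+|\le |N^+|$ plus the contribution from $H$, so $|I|\ge n/2$. In the Hamilton-path regime, the vertices of $P$ alternate between $N^+$ and $N$. This situation is the toughness/independence extremal case, and it is exactly where $\alpha(G)<n/2$ is used, as in Lemma~\ref{lem:equiv}.

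The main obstacle is the middle step: converting the local independent set $I$ of size $\ge k+1$ (we only have $k$-connectivity, not $(k+1)$) into global control. We need $\alpha\ge 2k+1$ to replace the missing unit of connectivity used in Theorem~\ref{thm1}. First we would try to reduce to Theorem~\ref{thm1}. If $G$ is $(k+1)$-connected, then, since Lemma~\ref{lem:equiv} gives $\tau(G)>1$ from $\alpha(G)<n/2$, Theorem~\ref{thm1} applies. So we may assume $G$ has a $k$-cut $S$. We then analyse the components of $G-S$: $(K_2\cup kK_1)$-freeness with $\alpha\ge 2k+1$ forces all but one component to be trivial or nearly so, and $\delta\ge 2k$ then makes $|S|\ge 2k$ unless the components are large cliques. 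In this way we build the Hamilton $(x,y)$-path directly through $S$, case by case, according to where $x$ and $y$ lie.
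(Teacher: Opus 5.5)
There is a genuine gap. The proposal never proves the step that separates $k$-connectivity from $(k+1)$-connectivity, namely ruling out a component $H$ of $G-V(P)$ that contains an edge $zw$.

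Your dichotomy ``either $|N|\ge 2k$ or $H$ is a single vertex'' is asserted, not derived. Its first alternative is in fact impossible when $H$ has an edge. By your own exchange facts, $N^+$ is independent with no neighbour in $H$, so $(K_2\cup kK_1)$-freeness gives $|N^+|\le k-1$. Combined with $|N|\ge k$, this only yields $|N|=k$ and $y\in N$, with no contradiction. Under $(k+1)$-connectivity you would get $|N^+|\ge k$ and be done, which is how Theorem~\ref{thm1} is proved. So your dichotomy quietly assumes away the whole difficult case.

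The paper handles this case with a specific chain. Write $N=\{a_1,\dots,a_k\}$ in order along $P$.
\begin{enumerate}[(a)]
\item Each vertex of $H$ has at least $2k-k=k$ neighbours in $H$.
\item $H$ is $2$-connected: a cut vertex would leave only trivial pieces, of degree at most $k+1<2k$.
\item By $k$-connectivity, any two attachment vertices have distinct neighbours in $H$, and Lemma~\ref{1.2} joins these by a path in $H$ on at least $k+1$ vertices.
\item Rerouting through such paths forces each segment $a_i\overrightarrow{P}a_{i+1}$ to have at least $k+3$ vertices, and forces $a_1^{+j}$ ($2\le j\le k+1$) to be non-adjacent to $a_2^+,\dots,a_{k-1}^+$.
\item Hence $H$ is a clique: non-adjacent $p,q\in V(H)$ would make $a_1^+a_1^{+2}$ together with $\{a_2^+,\dots,a_{k-1}^+,p,q\}$ an induced $K_2\cup kK_1$.
\end{enumerate}
Only then does $\alpha(G)\ge 2k+1$ enter. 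Since $\alpha(H)=1$ and $|N|=k$, there is an independent $k$-set in $G-V(H)-N$. It has no neighbour in $H$, so together with $zw$ it induces $K_2\cup kK_1$. Your paragraph about a maximum independent set $A$ being ``almost complete'' to $G-A$, and the path being ``easy to complete'', supplies neither this mechanism nor any other.

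The other steps do not hold as written either.
\begin{itemize}
\item The claim that every vertex outside $I$ is adjacent to all but at most $k-1$ vertices of $I$ is false. Lemma~\ref{1.1}(i) only says such a vertex has either no neighbour in $I$ or at least $|I|-k+1$ of them.
\item For $H=\{h\}$, the contradiction with $\alpha(G)<n/2$ needs the content of Lemma~\ref{claim23}. With $X=\{h\}\cup N_P(h)^+$, every edge of $P$ has exactly one end in $N_G(X)$. The ``both ends'' case is excluded by a crossing argument ending in $d_G(h)\le 2k-1$, which contradicts $\delta(G)\ge 2k$. Vertices outside $N_G(X)\cap V(P)$ see at most one vertex of $X$. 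So Lemma~\ref{1.1}(ii) makes $(V(G)\setminus V(P))\cup(V(P)\setminus N_G(X))$ independent of size at least $n/2$. You merely assert that such a set exists.
\item The alternative via a $k$-cut $S$ is misdescribed. Because $\delta(G)\ge 2k>|S|$, every component of $G-S$ has at least $k+1$ vertices, so none is ``trivial or nearly so''. No Hamilton path through $S$ is actually built, and the graph $G^*$ shows this $\kappa(G)=k$ case genuinely occurs.
\end{itemize}
Your reduction to Theorem~\ref{thm1} when $\kappa(G)\ge k+1$, via Lemma~\ref{lem:equiv}, is correct. But it leaves exactly the hard case open.
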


\noindent\textbf{Remarks.} 
\begin{enumerate}[(a)]
	\item For $k=1, 2$, every $(K_2 \cup kK_1)$-free graph with $\tau(G) > 1$ is Hamilton-connected, without requiring any minimum degree or connectivity conditions. 
For each integer $k \geq 3$, there exists a non-Hamilton-connected \((k-1)\)-connected \((K_2 \cup kK_1)\)-free graph $G$ with $\tau(G) > 1$ and \( \delta(G) \geq 2k\). 
This can be verified by the graph $M$ shown in Fig. \ref{F2},
which has the properties presented in Proposition \ref{prop2.8}.

\item Consider the balanced complete bipartite graph $G=K_{2k, 2k}$. Obviously, $G$ is a $k$-connected $(K_2 \cup kK_1)$-free graph with $\delta(G) = 2k$ and $\alpha(G) = 2k = \frac{|V(G)|}{2}$, but it is not Hamilton-connected.
This implies that the restrictions $2k+1 \leq \alpha(G) < \frac{|V(G)|}{2}$ are sharp.

\item For $n\geq 2k+1$, 
$K_n$ is a $(K_2\cup kK_1)$-free graph with $\kappa(K_n)=n-1\geq k+1$, $\alpha(K_n)=1$ and $\delta(K_n)\geq 2k$. By Theorem \ref{thm1},  $K_n$ is Hamilton-connected. However, $K_n$ does not satisfy the conditions of Theorem \ref{thm2}.
\end{enumerate}

Let $U=\overline{K}_{k}$, $U_1=U_2=\overline{K}_{k-1}$ and $W_1=W_2=K_{2k}$. We define the graph $G^*$, which is sketched in Fig. \ref{F3}, 
with vertex set  $V(G^*)=\cup_{i=1}^{2}(V(U_i)\cup V(W_i))\cup V(U)$ and edge set 
\begin{equation}\label{eq02}
E(G^*)=\bigcup_{i=1}^{2}
\big (E(W_i)\cup 
\{uw\mid u\in U\cup U_i,w\in W_i\} 
\big ).
\end{equation}
Then $G^*$ is a $(K_2\cup kK_1)$-free graph with $\kappa(G^*)=k$, $\alpha(G^*)=3k-2$ and $\delta(G^*)=2k$. Obviously, $2k+1 \leq \alpha(G^*) < \frac{7k-2}{2} = \frac{|V(G^*)|}{2}$.
By Theorem \ref{thm2}, $G^*$ is Hamilton-connected. 
However, $G^*$ does not satisfy the connectivity condition of Theorem \ref{thm1}.

Hence, for $k\geq 3$, neither Theorem \ref{thm1} nor Theorem \ref{thm2} implies  the other.
\begin{figure}[h]
	\centering
	\includegraphics[width=0.8\linewidth,trim={5cm 6.8cm 1cm 3.25cm},clip]{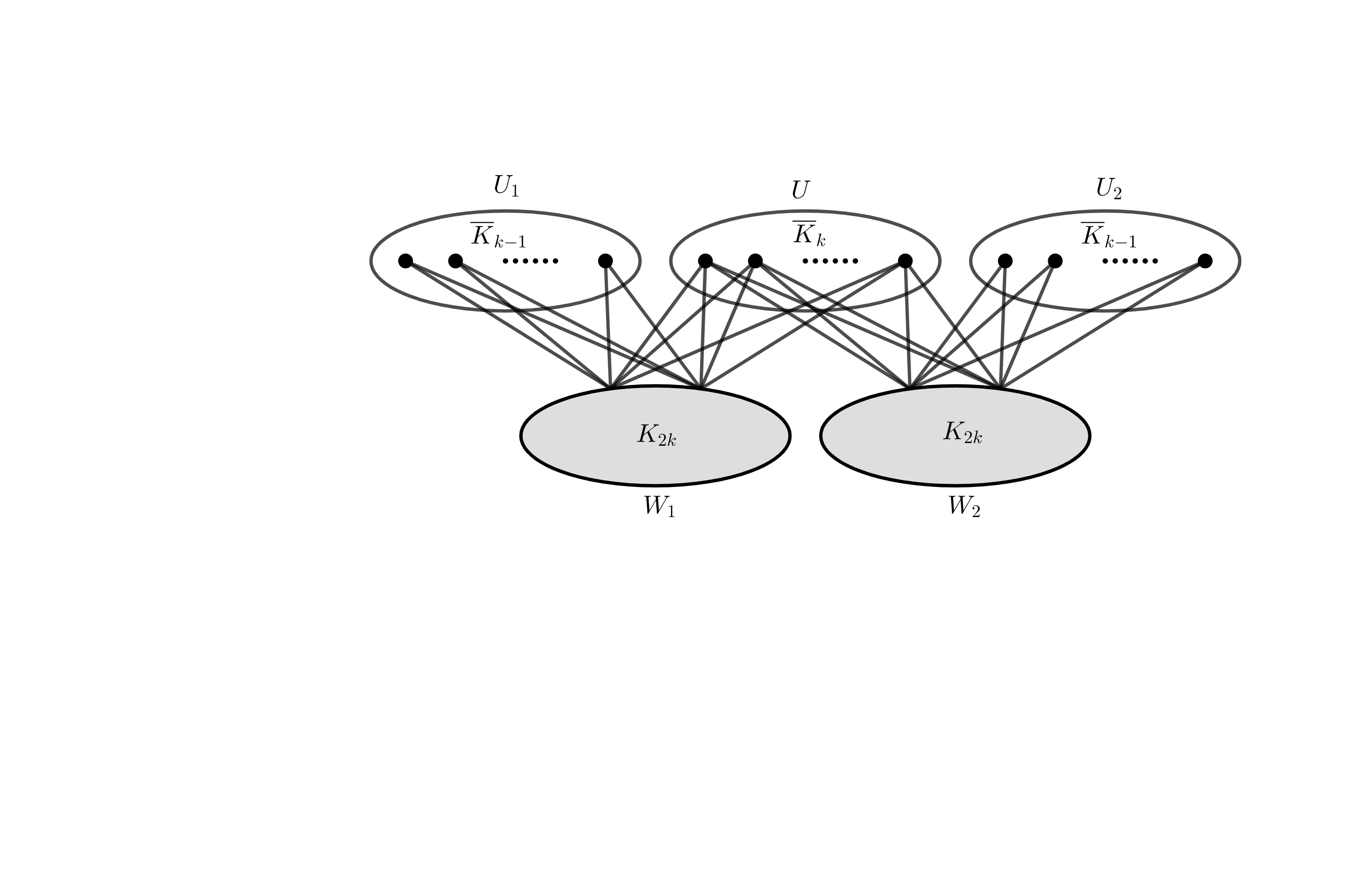}
	
	\caption{The graph $G^*$.} \label{F3}
\end{figure}

The rest of this paper is organized as follows. 
In Section \ref{sec2}, we list some lemmas that will be used in Sections \ref{sec3} and \ref{sec4} to prove our main results. Finally, in Section \ref{sec5} we conclude this paper and pose two open problems.

\section{Preliminaries  }\label{sec2}

We begin by introducing some useful notations. For two distinct vertices $x$ and $y$ in $G$, a $(x,y)$-path is a path whose endpoints are $x$ and $y$. Let $P$ be a path with a fixed orientation from its starting vertex to its end vertex. For a vertex $u \in V(P)$ and a positive integer $h$, we denote by $u^{+h}$ and $u^{-h}$ the $h$-th successor and the $h$-th predecessor of $u$ along $P$, respectively. For simplicity, we write $u^{+}$ for $u^{+1}$ and $u^{-}$ for $u^{-1}$.
For any subgraph $H$ of $G-V(P)$, 
let $N_P(H)$ be the set of vertices $x$ in $P$ such that $N_G(x)\cap V(H)\ne \emptyset$
and 
$N_P(H)^+ = \{x^+ \mid x \in N_P(H)\}$,
where \(x^+\) is the immediate successor of \(x\) on \(P\), if it exists. 
For any two vertices \( u, v \in V(P) \), the segment of \( P \) from \( u \) to \( v \) following the orientation is denoted by \( u \overrightarrow{P} v \); conversely, \( v \overleftarrow{P} u \) represents the reverse path from \( v \) to \( u \). 

The following lemmas will be used in our later proofs.
\begin{lem} [Ota and Sanka\cite{km}]\label{1.1}
	Let \( k \geq 2 \) be an integer, \( G \) be a \( (K_2 \cup kK_1) \)-free graph, and \( X \) be an independent set of \( G \). Then the following statements hold:
	\begin{enumerate}[(i)]
		\item
		for any \( v \in V(G) \), either \( N_G(v) \cap X = \emptyset \) or \( |N_G(v) \cap X| \geq |X| - k + 1 \); and
		\item for any $W\subseteq V(G)$, 
		if \( |N_G(w) \cap X| \leq |X| - k \) holds for every $w\in W$,
		then \( X \cup W \) is independent.
	\end{enumerate}
\end{lem}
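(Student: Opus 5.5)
Both parts follow directly from the definition of $(K_2 \cup kK_1)$-freeness. The idea is to exhibit an induced copy of $K_2 \cup kK_1$ whenever the conclusion fails, using an edge from $v$ (or from inside $W$) together with $k$ vertices of $X$ that avoid it.

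For (i), we argue by contradiction. Suppose $v$ has a neighbour $x \in X$ and $|N_G(v)\cap X| \le |X|-k$. Then $X\setminus N_G(v)$ contains at least $k$ vertices. Choose $k$ of them, say $y_1,\dots,y_k$. Consider the vertex set $\{v,x,y_1,\dots,y_k\}$. Here $vx$ is an edge. The $y_i$ are pairwise nonadjacent since $X$ is independent. Each $y_i$ is nonadjacent to $x$, again by independence of $X$. Each $y_i$ is nonadjacent to $v$ by the choice of the $y_i$. Also $v\notin X$, since $v$ has a neighbour in the independent set $X$, so all these vertices are distinct. Hence the set induces $K_2\cup kK_1$, a contradiction. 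Note that $k\ge 2$ is not even needed here; the hypothesis only ensures the statement is nontrivial.

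For (ii), each $w\in W$ has $|N_G(w)\cap X|\le |X|-k<|X|-k+1$, so by (i) $N_G(w)\cap X=\emptyset$. It remains to show that $W$ itself is independent, and that $W$ does not meet $X$ in a problematic way. The latter is harmless: a vertex of $W\cap X$ causes no adjacency issue, and by the former it has no neighbours in $X$ anyway.

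To show $W$ is independent, suppose $w_1w_2\in E(G)$ with $w_1,w_2\in W$. Both have no neighbours in $X$, and $X\setminus\{w_1,w_2\}$ is independent. Its size is at least $|X|-2$ in general, but we need $k$ vertices. The hypothesis forces $|X|\ge k$, since $|N_G(w)\cap X|\ge 0$. If $w_1,w_2\notin X$, pick $k$ vertices of $X$: together with the edge $w_1w_2$ they induce $K_2\cup kK_1$, a contradiction. If some $w_i\in X$, then the edge $w_1w_2$ would join $w_j$ to a vertex of $X$. This contradicts $N_G(w_j)\cap X=\emptyset$, or contradicts the independence of $X$ if both lie in $X$. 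So $W$ is independent, and with no edges between $W$ and $X$, the set $X\cup W$ is independent.

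The only delicate point is this bookkeeping about whether $W$ meets $X$; the core step in both parts is simply counting at least $k$ non-neighbours inside $X$.
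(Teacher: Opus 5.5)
Your proof is correct and complete. In (i) you take the edge $vx$ together with $k$ non-neighbours of $v$ in $X$, with $v\notin X$, to get an induced $K_2\cup kK_1$; in (ii) you reduce to (i) and then rule out edges inside $W$, including the case $W\cap X\neq\emptyset$. That last case does arise, since Lemma~\ref{lem:equiv-2} applies (ii) to a set containing $x\in X$.

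The paper gives no proof of this lemma and simply cites Ota and Sanka, so there is no alternative route to compare. Your direct argument is the natural one.
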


\begin{lem} [Erd\H{o}s and Gallai\cite{Erdos59}]\label{1.2}
	Let $G$ be a $2$-connected graph and $x$ and $z$ be two distinct vertices of $G$. If $d_G(v)\geq d$ for every vertex $v\in V(G)\backslash\{x,z\}$, then $G$ contains an $(x,z)$-path of length at least $d$.
\end{lem}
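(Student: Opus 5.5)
The plan is to argue by induction on $|V(G)|$, combined with a longest-path argument. Fix a longest $(x,z)$-path $P$, oriented from $x$ to $z$. If $V(P)=V(G)$, then some $v\notin\{x,z\}$ exists, because $G$ is $2$-connected and so has at least three vertices. Since $d_G(v)\ge d$, we get $|V(G)|\ge d+1$, and $P$ has length $|V(G)|-1\ge d$. The base cases, small $|V(G)|$ or $d\le 2$, are handled the same way: a $2$-connected graph contains a cycle through $x$ and $z$, and this already yields an $(x,z)$-path of length at least $2$.

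So assume $G-V(P)$ has a component $H$. By $2$-connectivity, $|N_P(H)|\ge 2$. Let $a$ and $b$ be the first and last vertices of $N_P(H)$ along $P$. Maximality of $P$ gives the usual facts: no two vertices of $N_P(H)$ are consecutive on $P$ (this includes the case that one neighbour of $H$ is reached through a path inside $H$ from another). More strongly, for $u,w\in N_P(H)$ joined through $H$ by a path with $r$ inner vertices, the segment $u\overrightarrow{P}w$ has length at least $r+1$. I would then apply induction to the auxiliary graph $G_1$ obtained from $G[V(H)\cup\{a,b\}]$ by adding the edge $ab$ if it is absent. This graph is $2$-connected, has fewer vertices than $G$, and has $a,b$ as its two exceptional vertices.

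The difficulty is that a vertex $h\in V(H)$ may have neighbours on $P$ other than $a$ and $b$, so $d_{G_1}(h)$ can be smaller than $d$. To fix this, I would let $d'=\min_{h\in H} d_{G_1}(h)$ and choose the $(a,b)$-path $Q$ of length at least $d'$ in $G_1$ given by induction. Every vertex $h\in H$ with $d_G(h)-d_{G_1}(h)=s$ has $s$ neighbours on the interior of $a\overrightarrow{P}b$. The non-consecutiveness of these neighbours, together with the ``gap'' inequality above, shows that $a\overrightarrow{P}b$ has length at least $s$ plus the contribution of $Q$. Replacing $a\overrightarrow{P}b$ by the better of $Q$ and a suitable rerouting through $H$ should then produce an $(x,z)$-path of length at least $d$, or an $(x,z)$-path longer than $P$, which is a contradiction.

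The main obstacle is exactly this bookkeeping: making the degree loss of $H$-vertices in $G_1$ be paid for by length of $P$ between $a$ and $b$. If the direct accounting fails, I would first try choosing $H$ and $P$ extremally, for example taking $P$ longest and, subject to that, $|V(H)|$ minimal. Failing that, I would switch to the classical alternative: add a new vertex $w$ adjacent to $x$ and $z$, and prove the cycle version, that there is a cycle through $w$ of length at least $d+2$. This is done by induction via a $2$-cut decomposition: if $\{u,v\}$ is a $2$-cut, contract the side not containing $x$ and $z$ into an edge $uv$, find a path there of length at least $d$ between $u$ and $v$ by induction, and splice it in.
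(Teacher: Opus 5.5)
\textbf{There is a genuine gap: your main route has a false step and an unproved key inequality, and your fallback lacks its terminal case.} The paper gives no proof of this lemma (it is quoted from Erd\H{o}s and Gallai), so your argument has to stand on its own.

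In your main route, the graph $G_1=G[V(H)\cup\{a,b\}]+ab$ need not be $2$-connected. Suppose $H$ has a cut vertex $c$ and some component of $H-c$ has all its neighbours on $P$ strictly inside $a\overrightarrow{P}b$. Then $c$ is a cut vertex of $G_1$. This is compatible with all your hypotheses, including maximality of $P$ and the degree bound with $d\ge 3$. For instance, let $\{a,p,b\}$ separate $G$ into cliques: cliques containing $x$ and $z$, and two large cliques through which $P$ runs from $a$ to $p$ and from $p$ to $b$. Let $H$ be two small cliques glued at $c$, one joined to $a$ and $b$, the other only to $p$. So induction cannot even be invoked.

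More seriously, the step that carries the whole theorem is only asserted: that the degree loss $s$ of $h\in V(H)$ is paid for by the length of $a\overrightarrow{P}b$. What you actually have are two separate bounds. Spacing of the $s$ interior neighbours of $h$ gives $|E(a\overrightarrow{P}b)|\ge 2s+2$. Maximality of $P$ against $Q$ gives $|E(a\overrightarrow{P}b)|\ge |E(Q)|\ge d'$. Together these yield only $\max\{2s+2,d'\}$, not the needed $s+d'$, and you yourself flag this as unresolved.

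Your fallback is also incomplete. The $2$-cut step is a sound reduction. For a full component $C$ of $G-\{u,v\}$ avoiding $x,z$, the graph $G[C\cup\{u,v\}]+uv$ is $2$-connected and every vertex of $C$ keeps its degree, which is exactly what fails for $G_1$. Splicing also needs a cycle through $w$ and $uv$ in the reduced graph, which $2$-connectivity supplies. But the reduction gives nothing when no such $2$-cut exists, e.g.\ when $G$ is $3$-connected or every $2$-cut separates $x$ from $z$. For that terminal case you give no argument, and it is where the work lies.

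The missing idea is to delete an end and lower $d$ by one. Let $d\ge 3$ and induct on $|V(G)|$.
\begin{enumerate}[(i)]
\item If $G-z$ is $2$-connected, pick $y\in N_G(z)\setminus\{x\}$. Every vertex of $G-z$ other than $x,y$ has degree at least $d-1$ there. Induction gives an $(x,y)$-path of length at least $d-1$ in $G-z$, and appending $yz$ finishes.
\item Otherwise $G-z$ has a cut vertex $c$, and some component $C$ of $G-\{c,z\}$ misses $x$. Then $G[C\cup\{c,z\}]+cz$ is $2$-connected and smaller than $G$, and all vertices of $C$ keep their full degree. Induction yields a $(c,z)$-path of length at least $d\ge 3$; it therefore avoids the edge $cz$ and lies in $G$. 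If $c=x$ this is the required path. Otherwise, prefix it with an $(x,c)$-path through the component of $G-\{c,z\}$ containing $x$; $c$ has a neighbour there by $2$-connectivity.
\end{enumerate}
No longest-path bookkeeping is needed.
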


\begin{lem}
	\label{lem:yibian}
	Let $G$ be a graph of order $n \ge 3$. If \( \tau(G) > 1 \), then \( \alpha(G) < \frac{n}{2} \).
\end{lem}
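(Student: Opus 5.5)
Take a maximum independent set $X$ of $G$ with $|X|=\alpha(G)$, and let $S=V(G)\setminus X$. We argue by contradiction, so assume $\alpha(G)\ge \frac{n}{2}$, which means $|X|\ge |S|$. The idea is to use $S$ as a separating set and show that it contradicts $\tau(G)>1$.

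First, $G$ is not complete. If it were, then $\alpha(G)=1\ge n/2$ would force $n\le 2$, contradicting $n\ge 3$. Hence $\tau(G)$ is given by the minimum in (\ref{eq01}). Also, $|X|\ge n/2\ge 3/2$ gives $|X|\ge 2$.

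Next, look at $G-S=G[X]$. Since $X$ is independent, this graph has no edges, so each vertex of $X$ is a separate component and $\omega(G-S)=|X|\ge 2$. Therefore $S$ is admissible in the definition of toughness, and
\[
\tau(G)\le \frac{|S|}{\omega(G-S)}=\frac{n-|X|}{|X|}\le \frac{n-n/2}{n/2}=1,
\]
which contradicts $\tau(G)>1$. So $\alpha(G)<\frac{n}{2}$.

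The only point that needs care is checking that $\omega(G-S)\ge 2$ and that $G$ is non-complete, so that the toughness bound actually applies. Both follow from $n\ge 3$ together with $|X|\ge n/2$, as shown above. The rest is direct arithmetic.
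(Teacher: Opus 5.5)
Your proposal is correct and follows essentially the same argument as the paper: take an independent set of size at least $\frac{n}{2}$, use its complement $S$ as the separating set, and observe that $\omega(G-S)=|X|\ge 2$ forces $\tau(G)\le 1$. The only difference is that you explicitly check that $G$ is non-complete, a point the paper leaves implicit; this is harmless.
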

\begin{proof}
	Suppose 
	 \( \alpha(G) \geq \frac{n}{2} \).
	Let $I$ be an independent set of $G$ with  $|I|\ge \frac{n}{2}$.
	Since $n\ge 3$, $|I|\ge 2$.
	Then $S: = V(G) \setminus I$
	is a cutset of $G$. 
	Observe that $|S| = n - |I| \leq \frac{n}{2}$
	and $\omega(G - S) = |I| \geq \frac{n}{2} \geq |S|$.
	Thus, $\tau(G)\le \frac{|S|}{\omega(G - S)} \leq 1$,
	a contradiction to the given condition.
	
	This completes the proof of Lemma \ref{lem:yibian}.
\end{proof}

\begin{lem} \label{claim01}
	Let \( G \) be a $k$-connected graph, 
	where $k\ge 2$.
	Assume that $u$ and $v$ are 
	distinct vertices in $G$ 
	such that no Hamilton path of 
	$G$ connects $u$ and $v$.
	If \( P \) is a longest $(u,v)$-path in $G$ 
	and $H$ is a component of $G-V(P)$,
	then each of the following
	 holds: 
	\begin{enumerate}[(i)]
		\item	$|V(P)| \geq k$, and 
		\item $N_P(H)^+ \cap N_P(H) = \emptyset$ and $N_P(H)^+$ is an independent set.
	\end{enumerate}
\end{lem}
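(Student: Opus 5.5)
The plan is a direct extremality argument. We exploit that $P$ is a longest $(u,v)$-path. Whenever the structure asserted in (i) or (ii) fails, we reroute $P$ through $H$ and obtain a strictly longer $(u,v)$-path, which is a contradiction. Throughout, $H$ is nonempty, since $G-V(P)\neq\emptyset$: otherwise $P$ would be a Hamilton path connecting $u$ and $v$. Also $N_G(V(H))\subseteq V(P)$, because $H$ is a component of $G-V(P)$.

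We prove the first part of (ii) first, since (i) uses it. Suppose $x\in N_P(H)\cap N_P(H)^+$. Then $x=y^+$ for some $y\in N_P(H)$, so both $y$ and $y^+$ have neighbours in $H$. Pick $a,b\in V(H)$ with $ya,\,y^+b\in E(G)$, and let $Q$ be an $(a,b)$-path in the connected graph $H$; possibly $a=b$. Then
\[
u\overrightarrow{P}y\,Q\,y^+\overrightarrow{P}v
\]
is a $(u,v)$-path containing all of $V(P)$ together with at least one vertex of $H$. This contradicts the maximality of $P$, so $N_P(H)^+\cap N_P(H)=\emptyset$.

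For the independence of $N_P(H)^+$, take distinct $x,y\in N_P(H)$ with $x$ before $y$ on $P$, both having successors, and suppose $x^+y^+\in E(G)$. By the previous step $x^+\ne y$, so $x^+$ lies strictly between $x$ and $y$ in the sense needed. Choose $a,b\in V(H)$ with $xa,\,yb\in E(G)$ and an $(a,b)$-path $Q$ in $H$. Then
\[
u\overrightarrow{P}x\,Q\,y\overleftarrow{P}x^+\,y^+\overrightarrow{P}v
\]
is a $(u,v)$-path. It uses every vertex of $P$ exactly once and at least one vertex of $H$, which is again a contradiction. The only care needed is the bookkeeping: $v$ has no successor and is simply excluded from generating an element of $N_P(H)^+$, and the segments are disjoint because $x^+\neq y$.

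For (i), note that $N_P(H)\neq V(P)$. Indeed, $|V(P)|\ge 2$, so $u,u^+\in V(P)$, and they cannot both lie in $N_P(H)$ by the first part of (ii). Hence $S:=N_P(H)=N_G(V(H))$ is a vertex set whose removal separates the nonempty set $V(H)$ from the nonempty set $V(P)\setminus S$. Since $G$ is $k$-connected, $|S|\ge k$, and therefore $|V(P)|\ge|S|\ge k$ (in fact $|V(P)|\ge k+1$).

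I do not expect a real obstacle here. The only delicate point is the index bookkeeping in the rerouting for the independence claim: checking that the pieces are disjoint (this is where $x^+\neq y$ from the first part of (ii) is used), and handling the endpoint $v$, which has no successor.
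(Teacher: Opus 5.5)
Your proof is correct. Your argument for (ii) is the same rerouting argument the paper uses, but for (i) you take a different route.

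The paper proves (i) by contradiction using degrees. If $|V(P)|\le k-1$, then $G-V(P)$ is connected, so $H=G-V(P)$. Every vertex of $P$ has degree at least $k$, hence at least two neighbours in $H$, and a path through $H$ can be inserted along any edge of $P$.

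You instead prove the first half of (ii) first. You use it to see that $u$ and $u^+$ are not both in $N_P(H)=N_G(V(H))$. Hence $N_P(H)$ is a genuine separator of $G$, and $|N_P(H)|\ge k$ by $k$-connectivity alone.

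The paper's version keeps (i) logically independent of (ii), but it relies on the minimum-degree consequence of $k$-connectivity. Yours needs no degree information and gives the slightly stronger $|V(P)|\ge k+1$. More usefully, it gives $|N_P(H)|\ge k$ directly. That is the form in which the paper actually uses (i) later, for example $t\ge k$ in its proof of (ii) and $d\ge k$ in Claim~\ref{clak1} of the proof of Theorem~\ref{thm2}. There the bound $|V(P)|\ge k$ alone is not enough; it must be combined with $k$-connectivity through exactly your separator observation.
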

\begin{proof}
	$(i)$ Suppose that $|V(P)|\leq k-1$.
	Since $G$ is $k$-connected,
	$G-V(P)$ is connected and 
	 $H=G-V(P)$.
For any vertex $w$ in $P$, 
since $d_G(w)\geq k$
	and
	$|V(P)|\leq k-1$, we have 
	$|N_G(w)\cap V(H)|
	\ge d_G(w)-(|V(P)|-1)\ge 2$.

Now let \(xy\) be any edge on path \(P\).
By the conclusion in the previous paragraph, 
$|N_G(x)\cap V(H)|\ge 2$
and $|N_G(y)\cap V(H)|\ge 2$.
Thus, 
there exist distinct 
vertices $x'$ and $y'$ in $H$ 
with $x'\in N_G(x)$ and $y'\in N_G(y)$.
	Since $H$ is connected, there exists a $(x',y')$-path $Q$ in $H$.
	Then the path $P' =u \overrightarrow{P} x x'Q y' y \overrightarrow{P} v$ is a $(u,v)$-path that is longer than $P$, a contradiction.
	
	$(ii)$ Let $N_P(H) = \{x_1, \dots, x_t\}$. Since \(G\) is \(k\)-connected, by Lemma \ref{claim01}$(i)$, \(t \geq k\). Note that 
	$N_P(H)^+ \cap N_P(H) \ne  \emptyset$
	if and only if  $x_i$ and $x_i^+$ are contained in $N_P(H)$ for some $x_i\in V(P)$.
	However, if 
	$x_i, x_i^+\in N_P(H)$, then 
	there exists another $(u,v)$-path 
	$P'$ obtained from $P$ by 
	replacing edge $x_ix_i^+$ 
	by a path $x_iwP_0w'x_i^+$,
	where $x_iw,w'x_i^+\in E(G)$
	and $wP_0w'$ is a $(w,w')$-path in $H$.
	Clearly, $P'$ is a $(u,v)$-path 
	which is longer than $P$,
	a contradiction to the assumption of $P$.
	It follows immediately that $N_P(H)^+ \cap N_P(H) = \emptyset$.
	
	Suppose that there exist \( x_i^+, x_j^+ \in N_P(H)^+ \) $(i \neq j)$ with \( x_i^+ x_j^+ \in E(G) \). Without loss of generality, we assume that $i < j$. Since both $x_i$ and $x_j$ have neighbors in $H$, 
there is a $(x_i,x_j)$-path $Q$ in $G[V(H)\cup \{x_i,x_j\}]$.
Thus, the path
\(
P'= u \overrightarrow{P} x_i Q x_j \overleftarrow{P} x_i^+  x_j^+ \overrightarrow{P} v
\)
is a $(u,v)$-path that is longer than $P$, a contradiction.

This completes the proof of Lemma \ref{claim01}.\hfill 
\end{proof}

\begin{lem} \label{claim23}
	Let \( G \) be a \( (K_2 \cup kK_1) \)-free graph with \( \delta(G) \geq 2k\) and let $u,v\in V(G)$.
		Assume that 
		no Hamilton path of 
		$G$ connects $u$ and $v$,
	\( P \) is a longest $(u,v)$-path in $G$ 
	and $x$  is an isolated vertex of $G-V(P)$.
Let $N_G(x)=\{x_1, x_2, \dots, x_d\}
\subseteq V(P)$ 
and \( X = \{x\} \cup N_P(x)^+ \).
Then each of the following 
statements
holds:
	\begin{enumerate}[(i)]
		\item  for $1\le i<j\le d$
		and 
	\( ab \in E(x_i^+ \overrightarrow{P} x_j) \) 
	with \( b = a^+ \),
	either \( a x_j^+ \notin E(G) \) or
	  \( b x_i^+ \notin E(G) \); 
  
		\item for each edge $ab \in E(P)$,  $|\{a, b\} \cap N_G(X)| = 1$; and 
		
		\item 
		for any $y\in V(G)\setminus 
		(V(P)\cap N_G(X))$,
		 \( |N_G(y) \cap X| \leq 1 \).
	\end{enumerate}
\end{lem}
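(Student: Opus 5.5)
The plan is to exploit two facts: $X$ is a large independent set, and any extra adjacency near $P$ can be used to reroute $P$ through $x$, giving a longer $(u,v)$-path.

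\emph{Preliminaries.} Since $x$ is an isolated vertex of $G-V(P)$, we have $N_G(x)\subseteq V(P)$, so $d=d_G(x)\ge 2k$. Applying Lemma~\ref{claim01}(ii) with $H=\{x\}$ shows that $N_P(x)^+$ is independent. It is also disjoint from $N_P(x)$, so $x$ has no neighbour in $N_P(x)^+$, and hence $X$ is independent. Moreover $|X|\ge d\ge 2k$; this bound holds even if $v\in N_G(x)$ and $v^+$ is undefined.

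\emph{Part (i).} Suppose both $ax_j^+$ and $bx_i^+$ are edges. Then
\[
u\overrightarrow{P}x_i\,x\,x_j\overleftarrow{P}b\,x_i^+\overrightarrow{P}a\,x_j^+\overrightarrow{P}v
\]
is a $(u,v)$-path. It uses every vertex of $P$ together with $x$, so it is longer than $P$, a contradiction. The degenerate cases $a=x_i^+$ or $b=x_j$ still give valid paths, since the corresponding segment is then a single vertex.

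\emph{Part (iii).} If $y\in V(P)\setminus N_G(X)$, then $N_G(y)\cap X=\emptyset$ and there is nothing to prove. Otherwise $y\notin V(P)$. The case $y=x$ is trivial because $X$ is independent, so assume $y\neq x$. Then $y$ is not adjacent to $x$, since $x$ is isolated in $G-V(P)$. Suppose $y$ is adjacent to two vertices $x_i^+,x_j^+$ with $i<j$. Then
\[
u\overrightarrow{P}x_i\,x\,x_j\overleftarrow{P}x_i^+\,y\,x_j^+\overrightarrow{P}v
\]
is a longer $(u,v)$-path, again a contradiction.

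\emph{Part (ii), at least one endpoint.} Suppose neither $a$ nor $b$ lies in $N_G(X)$. If $a,b\notin X$, then $ab$ together with any $k$ vertices of $X$ (available since $|X|\ge 2k$) induces $K_2\cup kK_1$, which is impossible. If, say, $a\in X$, then $b\notin X$ because $X$ is independent. Taking $ab$ with $k$ vertices of $X\setminus\{a\}$ (there are at least $2k-1\ge k$) gives the same contradiction.

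\emph{Part (ii), not both endpoints; this is the main obstacle.} Suppose $a,b\in N_G(X)$. By Lemma~\ref{1.1}(i), each of $a,b$ has at least $|X|-k+1\ge k+1$ neighbours in $X$. Fix the segment of $P$ containing $ab$: it lies in some $x_i^+\overrightarrow{P}x_{i+1}$, or before $x_1^+$, or after $x_d^+$. Because $a$ and $b$ each see many vertices of $X$, I would use a counting or pigeonhole argument to find the following configuration: $a$ adjacent to some $x_t^+$ and $b$ adjacent to some $x_s^+$, where $s\le i<t$, or the mirror configuration. In the first configuration, part (i) applied to the pair $s<t$ gives a contradiction. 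For the mirror configuration, I would use the symmetric rerouting along the reversed orientation, and also use $x$ itself when $a$ or $b$ equals some $x_l$. The delicate points are the boundary segments and the cases where $x\in N_G(a)\cup N_G(b)$; I expect to handle these with the analogous explicit path constructions, checking each case separately.
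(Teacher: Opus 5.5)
\textbf{Verdict: there is a genuine gap.} Your arguments for (i), for (iii), and for the ``at least one endpoint'' half of (ii) are correct and match the paper's. The gap is in the ``not both endpoints'' half of (ii), which is the real content of the lemma. You leave it as a plan, and the plan as stated would not work, for two reasons.

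\emph{The counting step cannot always produce the configuration that (i) forbids.} Suppose $ab$ lies on $u\overrightarrow{P}x_1$. Then every vertex of $N_P(x)^+$ comes after $ab$, so no $x_s^+$ precedes $ab$ and (i) is vacuous. Freeness adds nothing beyond Lemma~\ref{1.1}: it only says $a,b$ jointly miss at most $k-1$ vertices of $X$, and $N_G(a)\cap X=N_G(b)\cap X=X\setminus\{x\}$ is consistent with that. The same problem arises whenever all $X$-neighbours of $a$ and $b$ lie on one side of $ab$.

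\emph{Reversing the orientation of $P$ is not a symmetry here.} The set $X$ is built from successors. On $\overleftarrow{P}$, part (i) speaks about $\{x\}\cup N_P(x)^-$, not about $X$. (A smaller point: in this case $ax\notin E(G)$ automatically, since $a=x_j$ would force $b=x_j^+\in X$. So the only ``$x$'' sub-case is $bx\in E(G)$.)

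\emph{The missing ingredient} is a second family of reroutings, for $ab$ lying on the same side of two vertices $x_s^+,x_t^+$ with $s<t$.
\begin{itemize}
\item If $ab$ precedes both, then $ax_s^+$ and $bx_t^+$ cannot both be edges, since
\[
u\overrightarrow{P}a\,x_s^+\overrightarrow{P}x_t\,x\,x_s\overleftarrow{P}b\,x_t^+\overrightarrow{P}v
\]
would be longer than $P$.
\item If $ab$ follows both, the same holds via
\[
u\overrightarrow{P}x_s\,x\,x_t\overleftarrow{P}x_s^+\,a\overleftarrow{P}x_t^+\,b\overrightarrow{P}v .
\]
\end{itemize}
These are the paper's Fig.~1 paths. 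The paper combines them with the extremal indices $r,r',l,l'$, proves $r\le l$, and then counts to contradict $d\ge 2k$.

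With these two paths, your own counting idea finishes quickly. By Lemma~\ref{1.1}(i),
\[
|N_G(a)\cap N_G(b)\cap X|\ge 2(|X|-k+1)-|X|\ge 2,
\]
and $x$ is not a common neighbour. So there are common neighbours $x_s^+,x_t^+$ with $s<t$. Since $a,b\notin X$, the edge $ab$ lies either between them, which (i) excludes, or before both, or after both, which the two new paths exclude. As written, however, this step is missing from your proposal.
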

\begin{proof}
	(i)	Suppose that 
	\( a x_j^+ \in E(G) \) and
	\( b x_i^+ \in E(G) \).
	Then there is another $(u,v)$-path 
	\(P' = u \overrightarrow{P} x_i x x_j \overleftarrow{P} b x_i^+ \overrightarrow{P} a x_j^+ \overrightarrow{P} v\)
	that is longer than \( P \), a contradiction. 
	
	(ii) Since vertices \(a\) and \(b\) cannot both be adjacent to \(x\), we may assume without loss of generality that \( bx\notin E(G) \). Since $N_G(x) \subseteq V(P)$, \( d \geq \delta(G) \geq 2k\). Then, by Lemma \ref{claim01}$(ii)$, \( X \) is an independent set. We proceed by considering two cases based on the cardinality of \( X \).
	
	\noindent \textbf{Case 1:} \( X = \{x, x_1^+, x_2^+, \ldots, x_d^+\} \).
	
	First, if \( \{a, b\} \cap N_G(X) = \emptyset \), then for any subset \( X' \subseteq X \) with \( |X'| = k \), \( G \bigl[\{a, b\} \cup X'\bigr] \cong  K_2 \cup kK_1 \), a contradiction. Therefore, \( |\{a, b\} \cap N_G(X)| \geq 1 \).
	
	Now suppose that \( \{a, b\} \subseteq N_G(X) \). By Lemma \ref{1.1},
	\begin{equation}
		\left \{ 
		\begin{aligned}
			|N_G(a) \cap X| &\geq |X| - k + 1 = d - k + 2 , \\
			|N_G(b) \cap X| &\geq |X| - k + 1 = d - k + 2 .
		\end{aligned}
	\right.
		\label{eq1}
	\end{equation}
	
	Let \( r, r' \in [d] \) denote the maximum and minimum numbers such
	that \( ax_r^+ \in E(G) \) and \( ax_{r'}^+ \in E(G) \), respectively. Similarly, let
	\( l, l' \in [d] \) denote the minimum and maximum numbers such that
	\( bx_l^+ \in E(G) \) and \( bx_{l'}^+ \in E(G) \), respectively. 
	Then, \( r' < r \); otherwise, \( |N_G(a) \cap X| \leq 2 \), contradicting to \( |N_G(a) \cap X| \geq d - k + 2 \geq 4 \). Similarly, \( l < l' \). 
	
	Next, we show that \( r \leq l \).
	Suppose that \( l < r \).
	Then, \( ab \notin E(x_l^+ \overrightarrow{P} x_r) \); otherwise, a contradiction obtained immediately from Lemma \ref{claim23}$(i)$.
	Then, \( ab \neq x_rx_r^+ \); otherwise, \(P' = u \overrightarrow{P} x_l x a \overleftarrow{P} x_{l}^+ b \overrightarrow{P} v\) is a $(u,v)$-path with length $|V(P)|$, a contradiction. 
	\begin{figure}[htbp]
		\centering
		\includegraphics[width=1.0\linewidth,trim={1.2cm 10.5cm 1.2cm 7.1cm},clip]{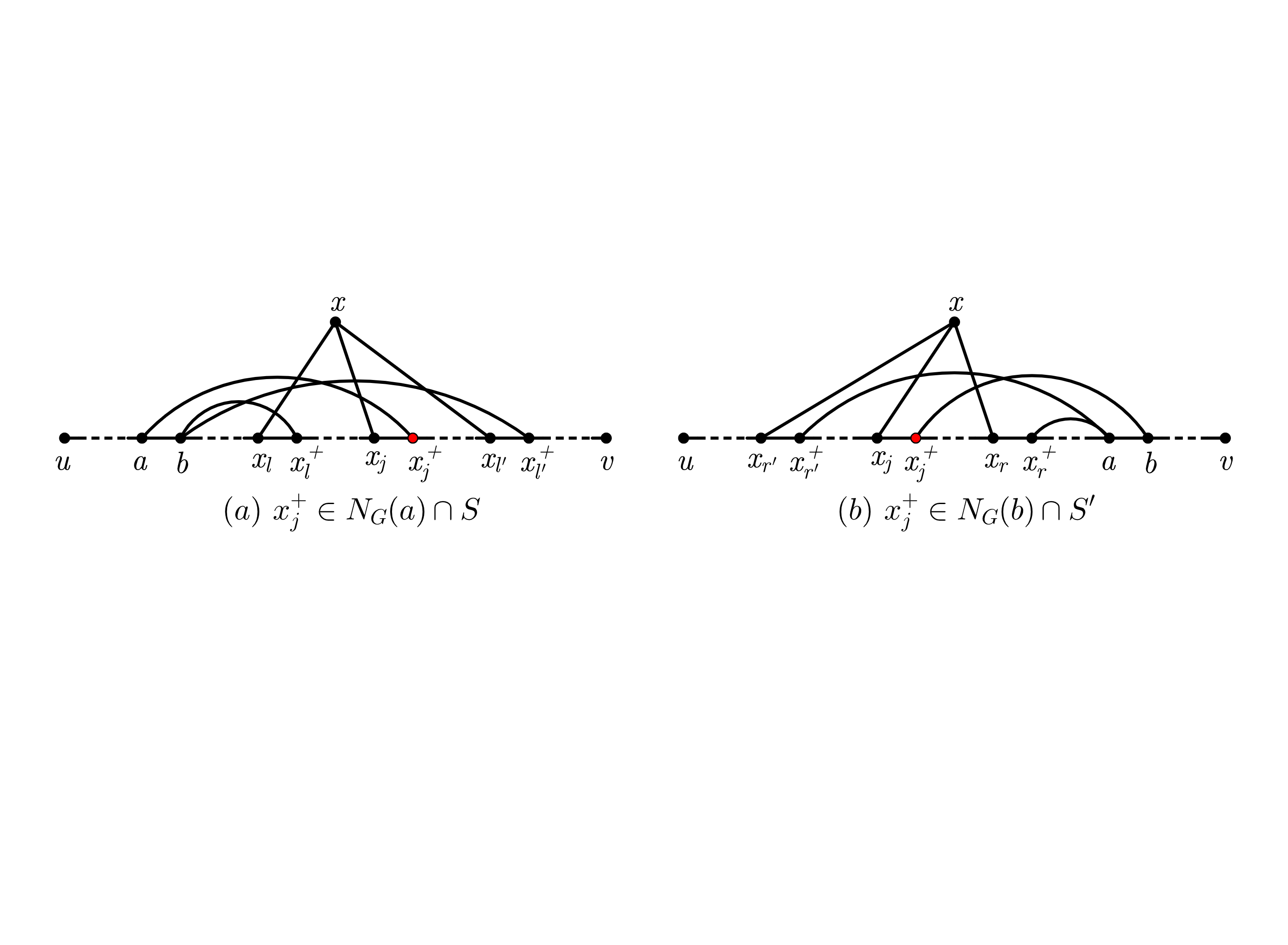}
		\caption{Constructions of a longer path.} \label{F1}
	\end{figure}
	
	When \( ab \in E(u \overrightarrow{P} x_l^+) \),  given that $|N_G(b) \cap X| \geq d - k + 2$ and $N_G(b) \cap X \subseteq \{x_j^+ :l \leq j \leq l'\}$, it follows that $l' - l \geq k+1$. Thus, the set  $S = \{x_j^+ : l \leq j < l'\}$ has size at least $k+1$. Furthermore, $N_G(a) \cap S=\emptyset$; otherwise, there exists $x_j^+ \in N_G(a) \cap S$, such that \(P' = u \overrightarrow{P} a x_{j}^+ \overrightarrow{P} x_{l'} x x_j \overleftarrow{P} b x_{l'}^+ \overrightarrow{P} v\) is a $(u,v)$-path with length $|V(P)|$, as shown in Fig. \ref{F1}$(a)$, a contradiction. Then, $l <l' \leq r$ and the edge $a x_{r}^+$ is the only possible edge in $G\bigl[S \cup \{a, x_{r}^+\}\bigr]$. This implies that $G\bigl[S \cup \{a, x_{r}^+\}\bigr]$ contains $K_2 \cup kK_1$ as an induced subgraph of $G$, a contradiction.
	
	When \( ab \in E(x_r^+ \overrightarrow{P} v) \), given that $|N_G(a) \cap X| \geq d - k + 2$ and $N_G(a) \cap X \subseteq \{x\} \cup \{x_j^+ : r' \leq j \leq r\}$, it follows that $r - r' \geq k$. Thus, the set  $S' =\{x\} \cup \{x_j^+ : r' < j \leq r\}$ has size at least $k+1$. Furthermore, $N_G(b) \cap S'=\emptyset$; otherwise, there exists $x_j^+ \in N_G(b) \cap S$, such that \(P' = u \overrightarrow{P} x_r' x x_j \overleftarrow{P} x_{r'}^+ a \overleftarrow{P} x_{j}^+ b \overrightarrow{P} v\) is a $(u,v)$-path with length $|V(P)|$, as shown in Fig. \ref{F1}$(b)$, a contradiction. Then, $ l \leq r' <r$ and the edge $b x_{l}^+$ is the only possible edge in $G\bigl[S' \cup \{b, x_{l}^+\}\bigr]$. This implies that $G\bigl[S' \cup \{b, x_{l}^+\}\bigr]$ contains $K_2 \cup kK_1$ as an induced subgraph of $G$, a contradiction.
	
	Since all cases where \( l < r \) lead to a contradiction, we conclude that \( r \leq l \). Then, \( N_G(b) \cap X \subseteq \{x_j^+ : l \leq j \leq d\} \) and \( N_G(a) \cap X \subseteq \{x\} \cup \{x_j^+ : 1 \leq j \leq r\} \). By (\ref{eq1}),
	\begin{equation}
		l \leq k - 1 \quad \text{and} \quad r \geq d - k + 1.
		\label{eq2}
	\end{equation}
	Since \( r \leq l \), \( d \leq 2k-2 \), a contradiction.
	
	\noindent \textbf{Case 2:} \( X = \{x, x_1^+, x_2^+, \ldots, x_{d-1}^+\} \).
	
	First, if \( \{a, b\} \cap N_G(X) = \emptyset \), then, $|\{a, b\} \cap N_G(X)| \geq 1$; otherwise, for any subset \( X' \subseteq X \) with \( |X'| = k \), \( G \bigl[\{a, b\} \cup X'\bigr] \cong  K_2 \cup kK_1 \), a contradiction. 
	
	Suppose first that $\{a, b\} \subseteq N_G(X)$. Then, by Lemma \ref{1.1}, 
	\begin{equation}
		\left \{ 
		\begin{aligned}
			|N_G(a) \cap X| &\geq |X| - k + 1 = d - k + 1, \\
			|N_G(b) \cap X| &\geq |X| - k + 1 = d - k + 1.
		\end{aligned}
	\right.
		\label{eq4}
	\end{equation}
	
	Let \( r, r' \in [d-1] \) denote the maximum and minimum numbers such that \( ax_r^+ \in E(G) \) and \( ax_{r'}^+ \in E(G) \), respectively. Similarly, let
	\( l, l' \in [d-1] \) denote the minimum and maximum numbers such that
	\( bx_l^+ \in E(G) \) and \( bx_{l'}^+ \in E(G) \), respectively. 
	Then, \( r' < r \); otherwise, \( |N_G(a) \cap X| \leq 2 \), contradicting the fact that \( |N_G(a) \cap X| \geq d - k + 1 \geq 3 \). 
	Similarly, \( l < l' \).
	
	The proof that \( r \leq l \) follows analogously to the argument in Case 1 by examining the position of the edge \( ab \) along the path \( P \) and deriving contradictions in each segment. Therefore, we conclude that \( r \leq l \).
	
	Then, \( N_G(b) \cap X \subseteq \{x_j^+ : l \leq j \leq d-1\} \) and \( N_G(a) \cap X \subseteq \{x\} \cup \{x_j^+ : 1 \leq j \leq r\} \). By (\ref{eq4}),
	\begin{equation}
		l \leq k - 1 \quad \text{and} \quad r \geq d - k.
		\label{eq5}
	\end{equation}
	Since \( r \leq l \), \( d \leq 2k-1\), a contradiction.
	
	Therefore, in all cases, \(|\{a,b\} \cap N_{G}(X)| = 1\) for each edge \(ab \in E(P)\).
	
	$(iii)$ If \(y \in (V(P) \setminus N_G(X)) \cup \{x\}\), then \( |N_G(y) \cap X| = 0 \). If $y \in V(G) \setminus V(P)$ and $y \neq x$, then \( |N_G(y) \cap X| \leq 1 \). Otherwise, there exist \( i, j \in [d] \) with \( i < j \), and \( y x_i^+, y x_j^+ \in E(G) \), such that \(P' = u \overrightarrow{P} x_i x x_j \overleftarrow{P} x_{i}^+ y x_{j}^+ \overrightarrow{P} v\) is a $(u,v)$-path with length $|V(P)|$, a contradiction.
	
	This completes the proof of Lemma \ref{claim23}.\hfill 
\end{proof}

\section{Proof of Theorem \ref{thm1} } \label{sec3}
\begin{lem}
	\label{lem:equiv}
	Let $G$ be a 
		\( (k+1) \)-connected \( (K_2 \cup kK_1) \)-free graph of order $n \ge 3$.
	Then \( \tau(G) > 1 \) if and only if \( \alpha(G) < \frac{n}{2} \).
\end{lem}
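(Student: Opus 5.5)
The forward implication is immediate from Lemma~\ref{lem:yibian}, since $n\ge 3$. So the work is in the converse. Assume $\alpha(G)<\frac n2$ and suppose, for contradiction, that $\tau(G)\le 1$. Note that $G$ is not complete, because otherwise $\tau(G)=\infty$. So there is a cutset $S$ with $\omega(G-S)\ge 2$ and $|S|\le \omega(G-S)$. Put $m=\omega(G-S)$ and let $C_1,\dots,C_m$ be the components of $G-S$. Since $G$ is $(k+1)$-connected, $|S|\ge k+1$, and therefore $m\ge k+1$.

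\textbf{Step 1: all components but one are trivial.} Suppose two components, say $C_1$ and $C_2$, each contain an edge. Take an edge $ab$ in $C_1$, and pick one vertex from each of $C_2,\dots,C_{k+1}$; this is possible because $m\ge k+1$. These $k$ vertices lie in distinct components, so they are pairwise nonadjacent and none is adjacent to $a$ or $b$. Hence $G$ contains an induced $K_2\cup kK_1$, a contradiction. We conclude that at most one component is nontrivial. Reindex so that $C_2,\dots,C_m$ are single vertices and $C_1$ is possibly nontrivial.

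\textbf{Step 2: find a large independent set.} We split into two cases.

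If $C_1$ is also trivial, then $V(G)\setminus S$ is an independent set of size $m\ge |S|$. Hence $\alpha(G)\ge m\ge n/2$, a contradiction.

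Otherwise $C_1$ has an edge $ab$. Apply the same argument as in Step 1 with $ab$ together with $k$ of the $m-1$ isolated vertices. This shows that whenever $m-1\ge k$ we again obtain an induced $K_2\cup kK_1$. Since $m\ge k+1$, we have $m-1\ge k$, so this case is impossible outright.

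\textbf{Main obstacle.} The only delicate point is confirming that $m\ge k+1$ holds in all cases. This uses $|S|\ge\kappa(G)\ge k+1$ together with $m\ge|S|$. Once that is checked, every component of $G-S$ must be trivial. Then $V(G)\setminus S$ is independent of size at least $n/2$, which contradicts $\alpha(G)<\frac n2$ and completes the proof.
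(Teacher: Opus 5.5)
Your proof is correct and follows essentially the same route as the paper. You take a cutset $S$ with $\omega(G-S)\ge |S|\ge k+1$, use $(K_2\cup kK_1)$-freeness to force every component of $G-S$ to be trivial, and conclude that $V(G)\setminus S$ is an independent set of size at least $n/2$. Your Step 1 is redundant: the argument in your Step 2 (an edge in one component plus one vertex from each of $k$ other components) already rules out any nontrivial component directly, which is how the paper states it in a single line.
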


\begin{proof}
	The necessity follows directly from Lemma \ref{lem:yibian}. It remains to prove the sufficiency.
	Suppose that $\alpha(G) < \frac{n}{2}$. 
	For the sake of contradiction, assume that $\tau(G) \leq 1$.
	
	Since $\tau(G) \leq 1$,
	by the definition of toughness, there exists a cutset $S$ of $G$
	such that $\omega(G - S) \geq |S|$. Since $G$ is $(k + 1)$-connected, $|S| \geq k + 1$, and so $\omega(G - S) \geq k + 1$.
	Since $G$ is $(K_2 \cup kK_1)$-free, every component of $G - S$ must be trivial.
	Therefore, $I:=V(G) \setminus S$ is an independent set of $G$
		and 
		$|S|\le \omega(G - S)=|I|=n-|S|$. 
		Then
		\(
		n = |S| +|I|\leq 2|I|,
		\)
		which implies $|I| \geq \frac{n}{2}$. Then, $\alpha(G) \geq |I| \geq \frac{n}{2}$, a contradiction to the assumption.
	
	This completes the proof of Lemma \ref{lem:equiv}.
\end{proof}

\begin{lem}
	\label{lem:equiv-2}
	Let $G$ be a 
		\( (K_2 \cup kK_1) \)-free graph 
		with $n \ge 3$, $\delta(G)\ge 2k$ and $\alpha(G)<\frac n2$.
		Suppose that 
		$G$ is not Hamilton-connected,
		and  $P$ is a longest path
		among all paths in $G$ that are not Hamilton paths and no Hamilton path connecting its endpoints.
Then, $\delta(G-P)\ge 1$.	
\end{lem}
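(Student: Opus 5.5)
Suppose, for a contradiction, that $G-V(P)$ has an isolated vertex $x$. Here $P$ is a longest $(u,v)$-path over all pairs $u,v$ for which $G$ has no Hamilton $(u,v)$-path. Then $P$ is in particular a longest $(u,v)$-path for its own endpoints, so Lemma~\ref{claim23} applies. Since $x$ is isolated in $G-V(P)$, we have $N_G(x)=\{x_1,\dots,x_d\}\subseteq V(P)$ with $d\ge \delta(G)\ge 2k$.

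Set $X=\{x\}\cup N_P(x)^+$, which is independent by Lemma~\ref{claim01}(ii). We have $|X|=d+1$ if $v\notin N_G(x)$ and $|X|=d$ otherwise. The plan is to show that $X$ together with all vertices of $G$ outside $N_G(X)$ forces an independent set of size at least $n/2$, contradicting $\alpha(G)<\frac n2$.

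The counting runs as follows. By Lemma~\ref{claim23}(ii), every edge $ab$ of $P$ has exactly one endpoint in $N_G(X)$. Hence the vertices of $P$ alternate between $N_G(X)$ and $V(P)\setminus N_G(X)$. Let $A=V(P)\setminus N_G(X)$. Alternation gives $|A|\ge \lfloor |V(P)|/2\rfloor$, and $A$ is independent because consecutive vertices never both lie in $A$.

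We still need $A$ to be independent as a set in $G$, not just along $P$. For this, use Lemma~\ref{1.1}(ii): every $w\in A$ has $N_G(w)\cap X=\emptyset$, so in particular $|N_G(w)\cap X|\le |X|-k$ (recall $|X|\ge 2k\ge k$). Therefore $X\cup A$ is independent. Now $X\subseteq A\cup\{x\}$, because $X\cap V(P)$ is independent and disjoint from $N_G(X)$. So we get the independent set $A\cup\{x\}$.

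Next we handle the vertices off $P$. Every vertex $y\notin V(P)$ has $|N_G(y)\cap X|\le 1$ by Lemma~\ref{claim23}(iii). Since $|X|-k\ge k\ge 1$, Lemma~\ref{1.1}(ii) applied to $W=(V(G)\setminus V(P))\cup A$ shows that $X\cup W$ is independent. Thus all of $V(G)\setminus V(P)$, together with $A$ and $X$, forms an independent set $I$.

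Finally we count $I$. We have $|I|\ge |A|+|V(G)\setminus V(P)|$. Since $P$ alternates, its two endpoints decide whether $|A|$ equals $\lceil |V(P)|/2\rceil$ or $\lfloor |V(P)|/2\rfloor$. Because $x\in I$ lies off $P$, $|V(G)\setminus V(P)|\ge 1$. Hence $|I|\ge \lfloor |V(P)|/2\rfloor + (n-|V(P)|)\ge n/2$, where the last step uses $n-|V(P)|\ge 1$ together with the floor loss of at most $1/2$. This contradicts $\alpha(G)<\frac n2$.

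The main obstacle is the independence step. We need $X\cup W$ independent, and not merely each part separately, so that vertices of $A$ and vertices off $P$ are pairwise nonadjacent. This is exactly what Lemma~\ref{1.1}(ii) provides once every vertex of $W$ meets $X$ in at most $|X|-k$ vertices. That condition uses $|X|\ge 2k$ and hence the hypothesis $\delta(G)\ge 2k$.

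A secondary point needs care for small $k$. Lemma~\ref{1.1} is stated for $k\ge2$, so the case $k=1$ must be checked separately. When $k=1$ the graph is $(K_2\cup K_1)$-free, hence complete multipartite, and the claim is immediate.
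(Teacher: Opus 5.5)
Your argument is correct and is essentially the paper's proof. It takes $X=\{x\}\cup N_P(x)^+$, uses Lemma~\ref{claim23}(ii) to get $|V(P)\setminus N_G(X)|\ge\lfloor |V(P)|/2\rfloor$, uses Lemma~\ref{claim23}(iii) with Lemma~\ref{1.1}(ii) to make $(V(G)\setminus V(P))\cup(V(P)\setminus N_G(X))$ independent, and counts using $x\notin V(P)$. Your separate treatment of $k=1$ is harmless but unnecessary: the short proof of Lemma~\ref{1.1} works verbatim for $k=1$, and in any case a complete multipartite graph with $\alpha(G)<\frac n2$ satisfies $\kappa(G)>\alpha(G)$, so it is Hamilton-connected by Chv\'atal--Erd\H{o}s and the lemma is vacuous there.
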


\begin{proof}
Let $u$ and $v$ be the two ends of $P$.
Suppose the conclusion fails 
and $H$ is a component of $G-P$ with $V(H)=\{x\}$.

Let 
\( X = \{x\} \cup N_P(x)^+ \). Then, by Lemma \ref{claim23}$(ii)$,
		$|\{a, b\} \cap N_G(X)| = 1$
		for each edge $ab$ on $P$,
		implying that 
		\( |W| \geq \lceil \frac{|V(P)|-1}{2} \rceil\),
		where 
		 \( W = V(P) \setminus N_G(X) \). 
		For every vertex \( y \in (V(G) \setminus V(P)) \cup W \) = \( V(G)\setminus 
		(V(P)\cap N_G(X))\), 
		by Lemma \ref{claim23}$(iii)$, \( |N_G(y) \cap X| \leq 1 \leq |X| - k \). Thus, by Lemma \ref{1.1}$(ii)$, \( X \cup (V(G) \setminus V(P)) \cup W \) = \( (V(G) \setminus V(P)) \cup W \) is an independent set in $G$. Therefore,
		\(
		\alpha(G) \geq |W| + |(V(G) \setminus V(P))| \geq \frac{n}{2},
		\)
		contradicting the given condition that $\alpha(G)<\frac n2$.

Hence the conclusion holds.
\end{proof}

\noindent \textbf{Proof of Theorem~\ref{thm1}.}
Observe that a $(K_2 \cup K_1)$-free graph is also $P_4$-free. Therefore, by Theorems \ref{jung} and \ref{zbw}, every $(K_2 \cup kK_1)$-free graph with \( \tau(G) > 1 \) is Hamilton-connected, where $k=1,2$. In the following, it suffices to consider the case $k \geq 3$.

By contradiction, suppose that $G$ is not Hamilton-connected.
Then, there exists at least one pair of distinct vertices between which no Hamilton path exists. Let $P$ be a longest path among all paths in $G$ that are not Hamilton paths and no Hamilton path connecting its endpoints.
Denote its endpoints by $u$ and $v$.
Let $H$ be a component of $G - P$.

We first show that $H=K_1$.
Suppose that there exists an edge \( zw \in E(H) \). Since $G$ is $(k+1)$-connected, $|N_P(H)^+| \geq |N_P(H)|-1 \geq k$.  
Due to the maximality of $P$, 
$N_P(H)^+ \cap N_P(H) = \emptyset$.
By Lemma \ref{claim01}$(ii)$, $G\bigl[N_P(H)^+ \cup \{z,w\}\bigr]$ contains $K_2 \cup kK_1$ as an induced subgraph of $G$, a contradiction. 
Hence $H=K_1$,
a contradiction to Lemma~\ref{lem:equiv-2}.
This completes the proof of Theorem \ref{thm1}.
\hfill $\blacksquare$

We end this section by providing an example showing that 
Theorem \ref{thm1} does not hold when the connectivity of
the graph is below $k$.

\begin{figure}[htbp]
	\centering
	\includegraphics[width=0.9\linewidth,trim={1cm 2.8cm 2cm 3cm},clip]{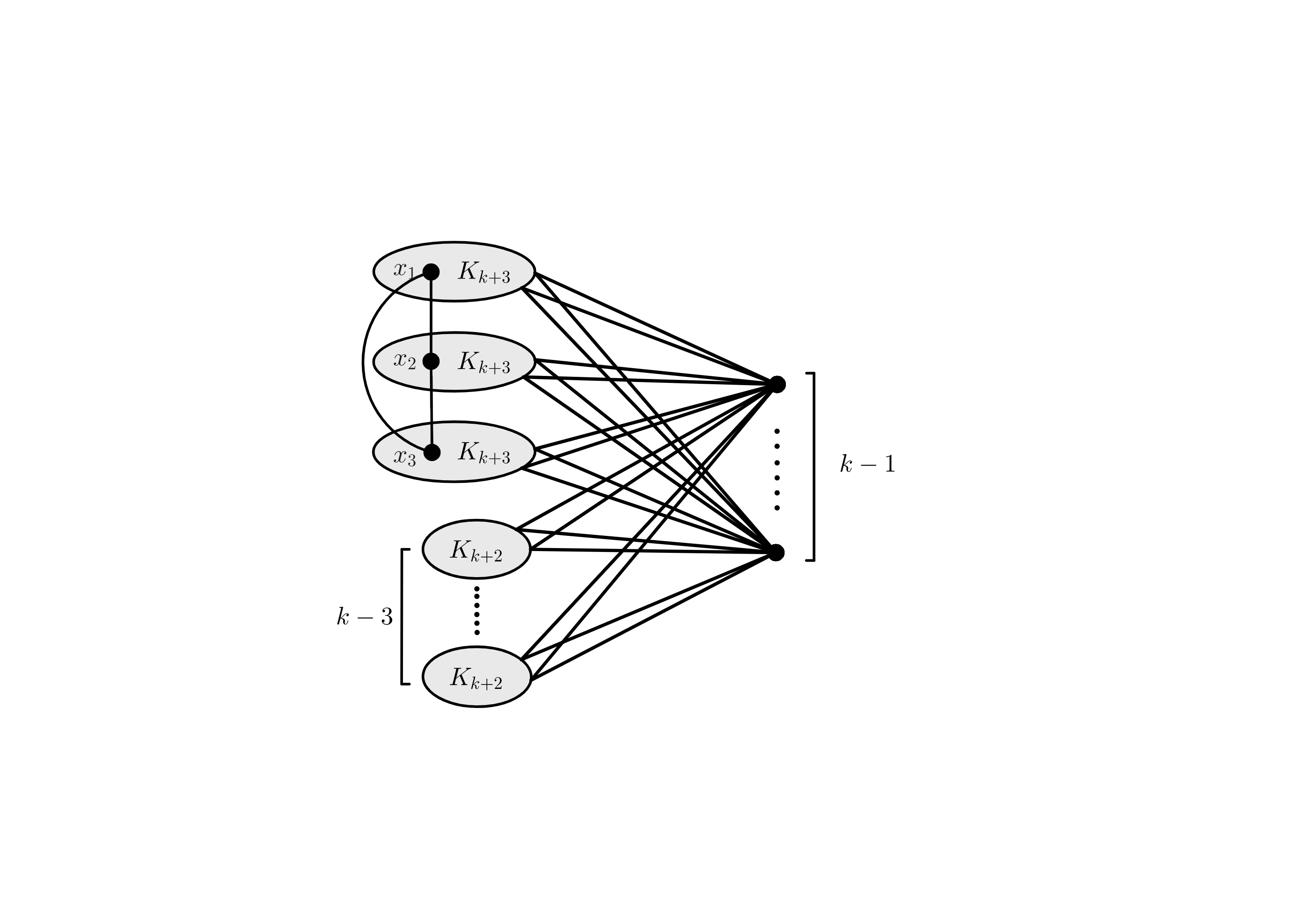}
	\caption{The graph 
		$M:=X\vee Y$, where $X$ and $Y$ are 
	defined in Page~\pageref{graphY}.} \label{F2}
\end{figure}

For each $i \in \{1,2,3\}$, let $X_i$ be a copy of $K_{k+3}$
and $x_i$ be a fixed  vertex in 
$X_i$. 
Let $X'$ be the graph obtained from $\bigcup_{i=1}^3 X_i$
by adding new edges 
$x_1x_2, x_2x_3$ and $x_3x_1$,
and let 
$X := X' \cup (k-3)K_{k+2}$. 
Let $Y$ be an empty graph 
	of order $k-1$ which is 
	disjoint from $X$.
	Now define $M$ to be the join of $X$ and $Y$, \label{graphY}
	i.e., $M:=X\vee Y$, 
	which is sketched in Fig. \ref{F2}.

\begin{pro}\label{prop2.8}
	The graph $M$ is a $(k-1)$-connected \( (K_2 \cup kK_1) \)-free graph with $\tau(M) > 1$ and $\delta(M) = 2k$.
But it is not Hamilton-connected.
\end{pro}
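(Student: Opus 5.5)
The plan is to verify the four positive properties of $M=X\vee Y$ directly from the construction, and then to exhibit a specific pair of vertices with no Hamilton path between them. Recall that $X$ has $k-2$ components: $X'$ and the $k-3$ copies of $K_{k+2}$. The vertex set $Y$ is independent of size $k-1$ and is completely joined to $X$.

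\textbf{Connectivity and minimum degree.} Removing $Y$ disconnects $M$, since $X$ has $k-2\ge 1$ components and at least two of them exist once $k\ge 4$; this shows $\kappa(M)\le k-1$. Conversely, let $T$ be a set of fewer than $k-1$ vertices. Then some $y\in Y\setminus T$ survives, and $y$ is adjacent to every surviving vertex of $X$. Every other surviving vertex of $Y$ is joined to $y$ through any surviving $X$-vertex, and $|X|$ is far larger than $k$. Hence $M-T$ is connected and $\kappa(M)= k-1$. For degrees:
\begin{itemize}
\item a vertex of a $K_{k+2}$ copy has degree $(k+1)+(k-1)=2k$;
\item a vertex of $X_i-x_i$ has degree $(k+2)+(k-1)=2k+1$;
\item $x_i$ has degree $2k+3$;
\item a vertex of $Y$ has degree $|X|$.
\end{itemize}
So $\delta(M)=2k$, provided at least one $K_{k+2}$ copy exists, i.e.\ $k\ge 4$. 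For $k=3$ one only gets $\delta(M)=2k+1\ge 2k$, which still suffices for Remark (a); I would point this out.

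\textbf{$(K_2\cup kK_1)$-freeness.} Suppose an induced copy exists with edge $ab$. First, $ab$ cannot meet $Y$. If it did, the $k$ isolated vertices would have to be non-adjacent to an $X$-endpoint, forcing them into $Y$, while also being non-adjacent to a $Y$-endpoint, forcing them out of $Y$.

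So $ab\subseteq X$. The $k$ isolated vertices are then pairwise non-adjacent vertices of $X$ outside $N[a]\cup N[b]$, since all of $Y$ is adjacent to $a$. An independent set of $X$ takes at most one vertex per clique $X_i$ and per $K_{k+2}$ copy, so $\alpha(X)=3+(k-3)=k$. Every placement of $ab$ loses at least one of these cliques:
\begin{itemize}
\item if $ab$ lies in a $K_{k+2}$ copy, that copy is lost;
\item if $ab$ lies in some $X_i$, then $X_i$ is lost;
\item if $ab=x_ix_j$, then both $X_i$ and $X_j$ are lost.
\end{itemize}
In every case at most $k-1$ isolated vertices remain, which is a contradiction.

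\textbf{$\tau(M)>1$.} $M$ is not complete. Let $S$ be a cutset. If $Y\not\subseteq S$, then $M-S$ is connected, unless $X\subseteq S$; in that case $\omega(M-S)\le k-1<|X|\le|S|$. So assume $Y\subseteq S$ and write $T=S\cap X$. Each $K_{k+2}$ copy contributes at most one component of $M-S$. Now $X'$ is three cliques hanging at the triangle $x_1x_2x_3$, and only removing some $x_i$ splits off $X_i-x_i$. Hence $X'-T$ has at most $1+\min\{|T|,2\}$ components. Therefore
\[
\omega(M-S)\le k-2+\min\{|T|,2\}<k-1+|T|=|S|.
\]

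\textbf{Not Hamilton-connected.} Take distinct $u,v\in Y$, which is possible since $k-1\ge 2$. Suppose $P$ is a Hamilton $(u,v)$-path. Removing $Y$ from $P$ removes the two ends and the $k-3$ interior $Y$-vertices, so $P-Y$ has at most $k-2$ segments. Each segment lies in a single component of $X$, and there are exactly $k-2$ such components. Hence $X'$ is covered by a single segment, i.e.\ $X'$ has a Hamilton path.

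This is impossible. Each block $X_i$ (of size $k+3\ge 2$) meets the rest of $X'$ only through the cut vertex $x_i$. A Hamilton path of $X'$ would therefore need an endpoint inside each of the three blocks $X_i-x_i$: a block containing no endpoint would have to be entered and left through $x_i$ alone. Three blocks but only two endpoints gives the contradiction.

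I expect this last step to be the main obstacle, namely choosing the right endpoints and the segment-counting set. The choice $u,v\in Y$ makes the count tight enough that the non-traceability of $X'$ finishes the argument.
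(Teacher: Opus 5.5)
Your proof is correct and follows the paper's argument: the same cutset case split (either $Y\subseteq S$ or $X\subseteq S$) for $\tau(M)>1$, and the same choice $u,v\in Y$, with the $k-2$ components of $X$ forcing a Hamilton path in the non-traceable $X'$. You also fill in the routine checks the paper calls obvious, and you rightly observe that $\delta(M)=2k+1$ when $k=3$, so the stated equality $\delta(M)=2k$ needs $k\ge 4$; the only blemish is that in the $Y$-endpoint case of your freeness argument \emph{into $Y$} and \emph{out of $Y$} are swapped, which does not affect the contradiction.
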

\begin{proof}
	Obviously, the graph \( M \) is \((k-1)\)-connected, \((K_2 \cup k K_1)\)-free, and \(\delta(M) = 2k\).
	
	Let $S\subset V(M)$ such that  $\omega(M-S)\ge 2$.
	By the structure of $M$, we know that either $V(Y)\subseteq S$ or $V(X)\subseteq S$; otherwise, $M-S$ is connected,  
	contradicting the 
		assumption that 
		$\omega(M-S)\ge 2$.

	Suppose first that $V(Y)\subseteq S$. Then $|S|\geq k-1$ and $\omega(M-S)\leq |\omega(X)|+ min\{2,t\}\leq k$, where $t=|\{x_{1},x_{2},x_{3}\}\cap S|$. In this case, $
	\frac{|S|}{\omega(M-S)} \geq \frac{|V(Y)|+t}{\omega(X)+min\{2,t\}}=
	\frac{k-1+t}{k-2+min\{2,t\}}>1.$

	Now suppose that 
	$V(X)\subseteq S$.
	Then $|S|\geq (k-3)(k+2)+3(k+3)> k(k+2)$ and $\omega(M-S)\leq |V(Y)|=k-1$. In this case, $
	\frac{|S|}{\omega(M-S)} >
	\frac{k(k+2)}{k-1}>1.$
	
	Hence, by the definition of $\tau(M)$, we have  $\tau(M)>1$.
	
	Choose two distinct vertices $u, v \in Y$.
	Suppose, for contradiction, that there exists a Hamilton path $P$ between $u$ and $v$ that traverses all vertices.
	Since \(N_M(Y) \subseteq X\) and \(Y\) is independent, the path \(P\) must alternately visit vertices from \(Y\) and \(X\).
	Note that 
		$\omega(X) = k-2 < k-1 = |Y|$.
		Thus, the subgraph of $P$ 
		induced by $X'$ must be 
		a path, 
		contradicting the fact that 
		$X'$ does not have a spanning path.
		It follows that $M$ is not Hamilton-connected.

	This completes the proof of Proposition \ref{prop2.8}.
\end{proof}

\section{Proof of Theorem \ref{thm2}} \label{sec4}

\begin{lem}	\label{k=12}
Let \(G\) be a \((K_2\cup kK_1)\)-free graph  with \(\alpha(G)\ge 2k+1\) and \(\delta(G)\ge 2k\),
	where $k\in \{1,2\}$. 
	Then \(\kappa(G)\ge k+1\).
\end{lem}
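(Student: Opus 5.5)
We argue by contradiction: suppose $S$ is a vertex cut of $G$ with $|S|\le k$, and let $C_1,\dots,C_m$ ($m\ge 2$) be the components of $G-S$. Since $|S|\le k$ and $\delta(G)\ge 2k$, every vertex of $G-S$ has at least $2k-|S|\ge k$ neighbours outside $S$, and these lie in its own component. So each $C_i$ has at least $k+1\ge 2$ vertices and is not an isolated vertex. In particular each $C_i$ contains an edge.

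The key observation is that an edge in one component forces every other component to be small in independence. Take an edge $ab$ in $C_1$. Then no vertex of $\bigcup_{i\ge 2}C_i$ is adjacent to $a$ or $b$. Since $G$ is $(K_2\cup kK_1)$-free, we get $\alpha\bigl(G[\bigcup_{i\ge2}C_i]\bigr)\le k-1$. Symmetrically, $\alpha(G[C_1])\le k-1$, using an edge of $C_2$. Hence
\[
\alpha(G-S)=\sum_{i}\alpha(G[C_i])\le 2(k-1).
\]

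Now take a maximum independent set $I$ of $G$, so $|I|\ge 2k+1$. Then $|I\cap S|\ge 2k+1-2(k-1)=3$, and therefore $|S|\ge 3$. For $k=1$ this already contradicts $|S|\le 1$, so only $k=2$ remains, and there $|S|\le 2<3$, again a contradiction.

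The step needing the most care is checking that every component of $G-S$ contains an edge. It is also worth noting that the same count works for general $k$ as long as $|S|\le 2$, but for $k\ge 3$ a cut with $|S|=k$ can satisfy $|I\cap S|\ge 3$. That is exactly why the lemma is restricted to $k\in\{1,2\}$.
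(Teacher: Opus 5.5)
Your proof is correct and is essentially the paper's argument. An edge in one component of $G-S$ forces the other components together to have independence number at most $k-1$. For $k=1$ this is the paper's direct induced $K_2\cup K_1$, and for $k=2$ it is the paper's observation that $G-S$ has exactly two complete components. You simply package both cases into the single bound $\alpha(G)\le |S|+2(k-1)\le 3k-2<2k+1$, which also makes clear why the lemma stops at $k=2$.
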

\begin{proof}
	Suppose that \(\kappa(G)\le k\). Then there exists a cutset \(S\subseteq V(G)\) with \(|S|\le k\) such that $G-S$ has at least two components. Let
	$H_1$ and $H_2$ be two distinct components of \(G-S\).
	
	For \(k=1\), we have 
		\(\delta(G)\ge 2\) and \(|S|\le 1\),
		implying that 
		each component of \(G-S\) contains at least one edge. 
		Pick \(ab\in E(H_1)\)
		and \(c\in V(H_2)\).
	Then \(G[\{a,b,c\}] \cong K_2\cup K_1\), contradiction.
	
	For \(k=2\), 
		we have \(\delta(G)\ge 4\) and \(|S|\le 2\),
		implying that $\delta(H_i)\ge 2$
		for each component $H_i$ 
		of $G-S$.
		Certainly, each $H_i$ contains at least one edge.
		Since $G$ is \((K_2\cup 2K_1)\)-free, 
		$H_1$ and $H_2$ are the only components of $G-S$
		and both of them are complete
		graphs.
	Thus, as \(|S|\le 2\), we have \(\alpha(G)\le 4\), 
	contradicting the assumption that \(\alpha(G)\ge 2k+1\ge 5\). 
	
	Therefore $\kappa(G) \ge k+1$.
\end{proof}

\noindent \textbf{Proof of Theorem~\ref{thm2}.}
By Lemma \ref{k=12} and Theorem \ref{thm1}, every $(K_2 \cup kK_1)$-free graph with $\alpha(G) < \frac{n}{2}$ is Hamilton-connected, where $k=1,2$. In the following, it suffices to consider the case $k \geq 3$.

By contradiction, suppose that $G$ is not Hamilton-connected.
Then, there exists at least one pair of distinct vertices between which no Hamilton path exists. Let $P$ be a longest path among all non-Hamilton paths in $G$ such that no Hamilton path connects its endpoints.
Denote its endpoints by $u$ and $v$.
Let $H$ be a component of $G-P$, and let $N_P(H)=\{x_1, x_2, \dots, x_d\}$.

Now we are going to establish some claims by which we complete the proof.

\begin{cla}\label{clak1}
	$H \neq K_1$, $|N_P(H)^+| = k-1$, $d=k$ and $|V(H)|\ge k+1$.
\end{cla}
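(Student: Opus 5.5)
We argue by applying the earlier lemmas to $P$ and $H$, in the order the four assertions appear, and then counting degrees.

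\textbf{$H\neq K_1$.} Here $G$ is $(K_2\cup kK_1)$-free with $\delta(G)\ge 2k$ and $\alpha(G)<\frac n2$. By the choice of $P$ in the proof of Theorem~\ref{thm2}, Lemma~\ref{lem:equiv-2} applies verbatim and gives $\delta(G-P)\ge 1$. Hence no component of $G-P$ is trivial, and $H\neq K_1$.

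\textbf{Upper bound on $|N_P(H)^+|$.} Since $H$ is connected and nontrivial, it contains an edge $zw$. The path $P$ is a longest $(u,v)$-path and there is no Hamilton $(u,v)$-path. Also $G$ is $k$-connected with $k\ge 3$. So Lemma~\ref{claim01}$(ii)$ gives that $N_P(H)^+$ is independent and disjoint from $N_P(H)$. Every neighbour of $z$ or $w$ on $P$ lies in $N_P(H)$, so neither $z$ nor $w$ has a neighbour in $N_P(H)^+$. Therefore, if $|N_P(H)^+|\ge k$, then $\{z,w\}$ together with any $k$ vertices of $N_P(H)^+$ induces $K_2\cup kK_1$, a contradiction. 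Hence $|N_P(H)^+|\le k-1$.

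\textbf{Lower bound and $d=k$.} First, $N_P(H)$ separates $H$ from the rest of $G$. If every vertex of $P$ were in $N_P(H)$, then $u^+\in N_P(H)\cap N_P(H)^+$, contradicting Lemma~\ref{claim01}$(ii)$. So some vertex of $P$ lies outside $N_P(H)\cup V(H)$, and $N_P(H)$ is a cutset. By $k$-connectivity, $d\ge k$. Every vertex of $N_P(H)$ except possibly $v$ has a successor on $P$, and distinct vertices have distinct successors. Thus $|N_P(H)^+|\ge d-1\ge k-1$. Combined with the upper bound, this forces $|N_P(H)^+|=k-1$ and $d=k$ (and incidentally $v\in N_P(H)$).

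\textbf{$|V(H)|\ge k+1$.} Take any $h\in V(H)$. All neighbours of $h$ lie in $V(H)\cup N_P(H)$, since $H$ is a component of $G-P$. So $h$ has at least $\delta(G)-d\ge 2k-k=k$ neighbours inside $H$, giving $|V(H)|\ge k+1$.

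The only step needing real care is checking that $N_P(H)$ is a genuine cutset, so that $k$-connectivity yields $d\ge k$; the argument via $u^+$ and Lemma~\ref{claim01}$(ii)$ above handles this. Everything else is a direct application of Lemmas~\ref{lem:equiv-2} and~\ref{claim01} plus the forbidden induced subgraph.
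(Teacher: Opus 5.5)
Your proof is correct and follows the paper's argument step for step. You use Lemma~\ref{lem:equiv-2} for $H\neq K_1$, Lemma~\ref{claim01}$(ii)$ with the forbidden $K_2\cup kK_1$ for $|N_P(H)^+|\le k-1$, $k$-connectivity for $d\ge k$, and the degree count for $|V(H)|\ge k+1$. The one small difference is in getting $d\ge k$: the paper cites Lemma~\ref{claim01}$(i)$, while you rule out $N_P(H)=V(P)$ via Lemma~\ref{claim01}$(ii)$ so that $N_P(H)$ is a genuine cutset, which spells out a step the paper leaves implicit.
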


\begin{proof}
By Lemma~\ref{lem:equiv-2}, 
$H\ne K_1$.	
Assume that $zw \in E(H)$.
Since $G$ is $k$-connected, by Lemma \ref{claim01}$(i)$, 
$d \geq k$ and $|N_P(H)^+| \geq d-1 \geq k-1$. 
Then $|N_P(H)^+| = k-1$;
	otherwise, by Lemma \ref{claim01}$(ii)$, $G\bigl[N_P(H)^+ \cup \{z,w\}\bigr]$ contains $K_2 \cup kK_1$ as an induced subgraph of $G$, a contradiction to the given condition.
So, $d=k$.
Since \( \delta(G) \geq 2k\), $|V(H)| \geq k+1$.
\end{proof}

\begin{cla}\label{cla0}
	$\kappa(H) \ge 2$.
\end{cla}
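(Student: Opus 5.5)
The plan is to show directly that $H$ has no cut vertex. The only tool needed is the induced-subgraph obstruction already used in Claim~\ref{clak1}, namely the $k-1$ independent vertices of $N_P(H)^+$. By Claim~\ref{clak1}, $|V(H)|\ge k+1\ge 4$, so $H$ is connected and not complete on at most two vertices. Hence it suffices to rule out a cut vertex of $H$.

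\textbf{Step 1 (the key observation).} Every vertex of $H$ is adjacent to at least one end of every edge of $H$. By Lemma~\ref{claim01}$(ii)$, the set $N_P(H)^+$ is independent and disjoint from $N_P(H)$. Hence no vertex of $N_P(H)^+$ has a neighbour in $H$. By Claim~\ref{clak1}, $|N_P(H)^+|=k-1$. Now suppose $ab\in E(H)$ and $y\in V(H)\setminus\{a,b\}$ with $ya,yb\notin E(G)$. Then $G\bigl[\{a,b,y\}\cup N_P(H)^+\bigr]\cong K_2\cup kK_1$, which contradicts the hypothesis that $G$ is $(K_2\cup kK_1)$-free.

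\textbf{Step 2 (each piece after deleting a cut vertex has an edge).} Suppose $c$ is a cut vertex of $H$, and let $C_1,C_2$ be two components of $H-c$. A vertex $w\in V(C_1)$ can have neighbours only in $V(C_1)\cup\{c\}\cup N_P(H)$, because $H$ is a component of $G-P$. By Claim~\ref{clak1}, $|N_P(H)|=d=k$. Therefore
\[
d_{C_1}(w)\ge \delta(G)-k-1\ge 2k-k-1=k-1\ge 2,
\]
using $k\ge 3$. So $C_1$ contains an edge $ab$.

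\textbf{Step 3 (contradiction).} Pick any $y\in V(C_2)$. Since $y$ lies in a different component of $H-c$ from $a$ and $b$, it is adjacent to neither $a$ nor $b$. This contradicts Step~1. Hence $H$ has no cut vertex, and together with $|V(H)|\ge 4$ this gives $\kappa(H)\ge 2$.

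The only delicate point is Step~1. There one must check that the vertices of $N_P(H)^+$ are nonadjacent to all of $H$, which holds because none of them lies in $N_P(H)$. The remaining steps are routine degree counting.
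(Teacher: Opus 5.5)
Your proof is correct and uses the same two ingredients as the paper. The first is the $K_2\cup kK_1$ obstruction built from the $k-1$ independent vertices of $N_P(H)^+$ (none adjacent to $H$), an edge in one component of $H-c$, and a vertex in another. The second is the degree count $\delta(G)\ge 2k>k+1$. The paper only orders them differently: it first uses the obstruction to force every component of $H-f$ to be trivial and then contradicts the minimum degree, whereas you use the degree count to produce the edge first.
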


\begin{proof}
	Suppose that $H$ contains a cut-vertex $f$. 
	We denote the components of $H - f$ by $H_1, H_2,\ldots, H_s$, where $s \geq 2$.
	We claim that each $H_i \cong K_1~(1 \leq i \leq s)$. 
	Otherwise, without loss of generality, we assume that \( h_1h_1' \in E(H_1) \), \( h_2 \in V(H_2) \). By Lemma \ref{claim01}$(ii)$ and $|N_P(H)^+| = k-1$ by Claim~\ref{clak1}, 
	\( G\bigl[N_P(H)^+ \cup \{h_1,h_1',h_2\}\bigr] \cong K_2 \cup kK_1 \), a contradiction.
	Then each vertex of $H_i$ has degree at most $k+1$ in $G$, contradicting the 
	condition that \( \delta(G) \geq 2k\).
	Thus, Claim \ref{cla0} holds. 
\end{proof}

\begin{cla}\label{cla1}
	For any two vertices $x_i$ and $x_j$ in $N_P(H)$, where
	$1\leq i<j\leq k-1$, 
	there exist two distinct vertices $w,w'$ in $H$ such that $x_iw,x_jw'\in E(G)$.
\end{cla}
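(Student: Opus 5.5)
The plan is to argue by contradiction, using only the $k$-connectivity of $G$ together with Claims~\ref{clak1} and~\ref{cla0}. Both $x_i$ and $x_j$ lie in $N_P(H)$, so each has at least one neighbour in $H$. The desired distinct vertices $w,w'$ fail to exist only if $N_G(x_i)\cap V(H)=N_G(x_j)\cap V(H)=\{w\}$ for a single vertex $w\in V(H)$. Indeed, if one of the two neighbourhoods in $H$ had two or more vertices, we could pick its vertex $w'$ different from a neighbour $w$ of the other. So I will assume
\[
N_G(x_i)\cap V(H)=N_G(x_j)\cap V(H)=\{w\}
\]
and derive a contradiction.

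First, consider $H' := H-w$. By Claim~\ref{clak1}, $|V(H)|\ge k+1$, so $H'$ is nonempty. By Claim~\ref{cla0}, $\kappa(H)\ge 2$, so $H'$ is connected. Next, since $H$ is a component of $G-V(P)$, every neighbour of $H'$ outside $H'$ lies in $\{w\}\cup V(P)$. Moreover, $N_P(H')\subseteq N_P(H)\setminus\{x_i,x_j\}$, because $x_i$ and $x_j$ have no neighbour in $H$ other than $w$. By Claim~\ref{clak1}, $d=|N_P(H)|=k$, hence $|N_P(H')|\le k-2$. Therefore the set
\[
S:=\{w\}\cup N_P(H')
\]
has $|S|\le k-1$, and $S$ separates $V(H')$ from the rest of the graph.

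It remains to check that $G-S$ really is disconnected, i.e.\ that some vertex lies outside $S\cup V(H')$. The vertex $x_i$ works: it lies on $P$, it is not in $N_P(H')$, and it is not $w$. So $x_i$ and the vertices of $H'$ are in different components of $G-S$. This gives a vertex cut of size at most $k-1$, contradicting that $G$ is $k$-connected. Hence $w\ne w'$ can always be chosen.

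The only delicate point is making sure that $H-w$ is both nonempty and connected, so that the cut $S$ isolates a genuine piece of the graph. This is exactly what $|V(H)|\ge k+1$ and $\kappa(H)\ge 2$ from the two previous claims supply. The argument does not use the restriction $j\le k-1$ beyond $x_i,x_j\in N_P(H)$, so it works for any pair of vertices of $N_P(H)$.
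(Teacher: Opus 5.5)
Your proof is correct and is essentially the paper's argument: the paper deletes $S=\{x_1,\ldots,x_k,y\}\setminus\{x_i,x_j\}$, a set of size $k-1$ that contains your cut $\{w\}\cup N_P(H-w)$, and uses the same observation that this separates $H-y$ from $x_i$. One small remark: connectedness of $H-w$ (and hence Claim~\ref{cla0}) is not actually needed, since $H-w$ being nonempty already makes $G-S$ disconnected.
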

\begin{proof}
	Suppose that the vertex \(y\) is the unique common neighbor of \(x_i\) and \(x_j\) in $H$. Let 
	\(
	S = \{x_1, x_2, \dots, x_k, y\} \setminus \{x_i, x_j\}.
	\)
	Then \(G - S\) is disconnected, contradicting the condition that \(G\) is \(k\)-connected.
\end{proof}

\begin{cla}
	For any integer $i$ $(1\leq i\leq k-1)$, $|V(x_i \overrightarrow{P} x_{i+1})|\geq k+3$.
\end{cla}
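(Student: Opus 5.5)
We argue by contradiction: assume that for some $i\in[k-1]$ the segment $x_i\overrightarrow{P}x_{i+1}$ has at most $k+2$ vertices, i.e.\ $x_{i+1}=x_i^{+m}$ with $m\le k+1$. The idea is to trade this short segment for a detour through $H$, producing a $(u,v)$-path longer than $P$.

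First we fix the detour. By Claim~\ref{cla0} the subgraph $H$ is $2$-connected, and by Claim~\ref{cla1} there are distinct vertices $w,w'\in V(H)$ with $x_iw,\,x_{i+1}w'\in E(G)$. Next we bound degrees inside $H$. A vertex $h\in V(H)$ has all its neighbours in $V(H)\cup N_P(H)$. By Claim~\ref{clak1} we have $|N_P(H)|=d=k$, so
\[
d_H(h)\ge \delta(G)-k\ge k .
\]
Now Lemma~\ref{1.2}, applied to $H$ with endpoints $w,w'$ and $d=k$, gives a $(w,w')$-path $Q$ in $H$ of length at least $k$. Hence $Q$ has at least $k+1$ vertices.

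We then form
\[
P'=u\overrightarrow{P}x_i\,w\,Q\,w'\,x_{i+1}\overrightarrow{P}v .
\]
This path removes the $m-1\le k$ interior vertices of $x_i\overrightarrow{P}x_{i+1}$ and inserts at least $k+1$ vertices of $Q$. Therefore $|V(P')|\ge |V(P)|+1$, and $P'$ still has endpoints $u$ and $v$.

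The main obstacle is that $P$ was chosen longest only among non-Hamilton paths whose endpoints are joined by no Hamilton path. If $P'$ is not a Hamilton path, it is a longer member of that family, which is a contradiction. If $P'$ happens to be Hamilton, then there is a Hamilton $(u,v)$-path, which contradicts the assumption that no Hamilton path connects $u$ and $v$. Either way $P'$ contradicts the choice of $P$, so the segment has at least $k+3$ vertices.

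The step I expect to be most delicate is the count. The claimed bound $k+3$ needs $Q$ to have at least $k+1$ vertices, which uses the full strength of $\delta(G)\ge 2k$ together with $d=k$ exactly. If the arithmetic came out one short, I would first try to show that no interior vertex of the segment is adjacent to $H$, via Lemma~\ref{claim01}$(ii)$, and then apply Lemma~\ref{1.2} more sharply, for instance by accounting for the neighbours of $w$ and $w'$ on $P$.
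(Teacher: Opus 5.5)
Your proof is correct and is essentially the paper's argument: Claim~\ref{cla1} gives the distinct attachments $w,w'$, and $\delta(G)\ge 2k$ with $|N_P(H)|=k$ gives $d_H(h)\ge k$. Lemma~\ref{1.2} on the $2$-connected $H$ then gives a $(w,w')$-path $Q$ with at least $k+1$ vertices, and rerouting through $Q$ yields a longer $(u,v)$-path. The count is exact (at most $k$ interior vertices removed, at least $k+1$ inserted), so the fallback you sketch is unnecessary, and your remark that $P'$ cannot be Hamilton is a harmless extra check the paper leaves implicit.
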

\begin{proof}
	Suppose that $|V(x_i \overrightarrow{P} x_{i+1})|\leq k+2$. By Claim \ref{cla1}, there exist two distinct vertices $w,w'$ in $H$ such that $x_iw,x_{i+1}w'\in E(G)$. Since $\delta(G)\geq 2k$ and $|N_P(H)|=k$, $d_H(v)\geq k$ for all $v\in V(H)$. By Claim \ref{cla0} and Lemma \ref{1.2}, there exists a path with at least $k+1$ vertices connecting any two vertices in $H$. Let $Q$ be a path in $H$ connecting $w$ and $w'$ such that $|V(Q)| \geq k+1$. Then
	\(P' = u \overrightarrow{P} x_i wQw' x_{i+1} \overrightarrow{P} v\) is a $(u,v)$-path longer than $P$, a contradiction.
\end{proof}
\begin{cla}\label{2.8}
	For any integer $j$ $(2\leq j\leq k+1)$, $N_P(x_1^{+j}) \cap \{x_2^+,x_3^+,\dots,x_{k-1}^+\} = \emptyset$.
\end{cla}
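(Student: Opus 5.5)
The plan is to argue by contradiction, in the same way as the preceding claims: an edge from $x_1^{+j}$ to some $x_t^+$ will be used to reroute $P$ through a long detour inside $H$. This produces a $(u,v)$-path longer than $P$, which is impossible. Fix $j$ with $2\le j\le k+1$ and $t$ with $2\le t\le k-1$. Suppose $x_1^{+j}x_t^+\in E(G)$; the claim concerns adjacency with $x_t^+$, so $N_P(x_1^{+j})$ is read as the neighbours of $x_1^{+j}$ on $P$.

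First I locate the relevant vertices. By the previous claim, $|V(x_1\overrightarrow{P}x_2)|\ge k+3$. Since $j\le k+1$, the vertices $x_1^+,\dots,x_1^{+j}$ all lie strictly between $x_1$ and $x_2$ on $P$. In particular $x_1^{+j}$ precedes $x_t$ and differs from it, and $x_1^{+j}\neq x_t^+$.

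Next I choose the detour in $H$. Since $1<t\le k-1$, Claim~\ref{cla1} gives distinct vertices $w,w'\in V(H)$ with $x_1w,\ x_tw'\in E(G)$. Exactly as in the previous claim, every vertex of $H$ has degree at least $k$ in $H$, because $\delta(G)\ge 2k$ and $|N_P(H)|=k$. Also $H$ is $2$-connected by Claim~\ref{cla0}. So Lemma~\ref{1.2} yields a $(w,w')$-path $Q$ in $H$ with $|V(Q)|\ge k+1$.

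Then I define
\[
P'=u\overrightarrow{P}x_1\,wQw'\,x_t\overleftarrow{P}x_1^{+j}\,x_t^+\overrightarrow{P}v .
\]
This is a $(u,v)$-path. It contains every vertex of $P$ except $x_1^+,\dots,x_1^{+(j-1)}$, which are $j-1\le k$ vertices, and it gains the at least $k+1$ vertices of $Q$. Hence $|V(P')|\ge |V(P)|+1$. If $P'$ is a Hamilton path, this contradicts the assumption that no Hamilton path joins $u$ and $v$. Otherwise $P'$ is a longer non-Hamilton path with the same endpoints, contradicting the choice of $P$.

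The only delicate point is the bookkeeping. One must check that the reversed segment $x_t\overleftarrow{P}x_1^{+j}$ and the forward segment $x_t^+\overrightarrow{P}v$ are disjoint and together with $u\overrightarrow{P}x_1$ cover everything except the $j-1$ skipped vertices. This relies on $x_t^+$ existing for $t\le k-1$, which holds since $|N_P(H)^+|=k-1$ by Claim~\ref{clak1}. The upper bound $j\le k+1$ is exactly what makes the $k+1$ vertices gained through $Q$ outweigh the loss.
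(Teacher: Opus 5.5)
Your proof is correct and matches the paper's argument: you use the same rerouted path $u\overrightarrow{P}x_1\,wQw'\,x_t\overleftarrow{P}x_1^{+j}\,x_t^+\overrightarrow{P}v$, with $w,w'$ taken from Claim~\ref{cla1} and $Q$ obtained from Claim~\ref{cla0} together with Lemma~\ref{1.2}. You also invoke the preceding claim ($|V(x_1\overrightarrow{P}x_2)|\ge k+3$) to place $x_1^{+j}$ strictly between $x_1$ and $x_2$, which is a step the paper uses without stating it.
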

\begin{proof}
	Suppose that \( x_1^{+j} x_i^+ \in E(G) \) for some $i \in \{2,3,\dots,k-1\}$. By Claim \ref{cla1}, there exist two distinct vertices $w,w'$ in $H$ such that $x_1w,x_iw'\in E(G)$. 
	By Claim \ref{cla0} and Lemma \ref{1.2}, there exists a path $Q$ in $H$ 
	with at least $k+1$ vertices connecting $w$ and $w'$.
	Then
	\(P' = u \overrightarrow{P} x_1 wQw' x_i \overleftarrow{P} x_1^{+j} x_i^+ \overrightarrow{P} v\) is a $(u,v)$-path longer than $P$, a contradiction.
\end{proof}
\begin{cla}
	$\alpha(H) = 1$. 
\end{cla}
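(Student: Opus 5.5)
Assume for contradiction that $H$ contains two nonadjacent vertices $h,h'$. The plan is to build an induced $K_2\cup kK_1$ from $h$, $h'$, the $k-1$ vertices of $N_P(H)^+$, and one suitable edge of $P$.

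Let $X=N_P(H)^+$. By Claim~\ref{clak1} we have $|X|=k-1$, and by Lemma~\ref{claim01}(ii) the set $X$ is independent and has no neighbours in $H$. Hence $X\cup\{h,h'\}$ is an independent set of size $k+1$. What remains is to find an edge $ab$ (on $P$, or elsewhere) with $a,b$ having no neighbours among $k$ of these vertices. It is natural to take $X'=X\cup\{h\}$, which has size $k$, and an edge whose endpoints avoid it.

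The candidate edges come from Claim~\ref{2.8}. The vertices $x_1^{+j}$ for $2\le j\le k+1$ have no neighbours in $\{x_2^+,\dots,x_{k-1}^+\}$. Moreover, since $|V(x_1\overrightarrow{P}x_2)|\ge k+3$, these vertices lie strictly inside the segment between $x_1$ and $x_2$. They therefore do not lie in $N_P(H)$, and so have no neighbours in $H$. Consider the edge $a b=x_1^{+2}x_1^{+3}$, which is available because $k+1\ge 3$. Neither endpoint is adjacent to $h$, $h'$, or $x_i^+$ for $i\ge 2$.

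The only possible adjacency left is to $x_1^+$. The main obstacle is to handle this, and it can also be avoided by dropping $x_1^+$ and using $h'$ instead. Take $X''=\{x_2^+,\dots,x_{k-1}^+\}\cup\{h,h'\}$, which has exactly $k$ vertices and is independent, because $h,h'$ are nonadjacent and $X\cap N(H)=\emptyset$. Then $G[\{a,b\}\cup X'']\cong K_2\cup kK_1$, which is a contradiction.

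The verification to do carefully is that $x_1^{+2},x_1^{+3}$ are genuinely not in $N_P(H)$, and are distinct from the $x_i^+$. This follows from the segment length bound: $x_1^{+3}$ precedes $x_2$ because the segment $x_1\overrightarrow{P}x_2$ has at least $k+3\ge 6$ vertices. One must also check that they are not among the $x_i^+$ for $i\ge2$, since those lie after $x_2$. Hence $\alpha(H)=1$, that is, $H$ is complete.
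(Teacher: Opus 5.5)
Your argument is correct and is essentially the paper's: both build an induced $K_2\cup kK_1$ whose isolated vertices are $x_2^+,\dots,x_{k-1}^+$ together with the two nonadjacent vertices of $H$, using Lemma~\ref{claim01}(ii) and Claim~\ref{2.8}. The only difference is the choice of $K_2$: the paper uses the path edge $x_1^+x_1^{+2}$, where $x_1^+$ is non-adjacent to the rest by Lemma~\ref{claim01}(ii) and only the case $j=2$ of Claim~\ref{2.8} is needed, whereas your edge $x_1^{+2}x_1^{+3}$ also needs $j=3$; both choices are valid.
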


\begin{proof}
	Suppose that there exist two vertices $p,q \in V(H)$ such that $pq\notin E(G)$.
	Then, by Lemma \ref{claim01}$(ii)$ and Claim \ref{2.8}, \( G\bigl[N_P(H)^+ \cup \{p,q,x_1^{+2}\}\bigr] \cong K_2 \cup kK_1 \), a contradiction.
\end{proof}
Since $\alpha(H) = 1$ and $\alpha(G) \geq 2k+1$, $\alpha(G-H) \geq 2k$ and $\alpha(G-H - N_P(H)) \geq k$.
Let $I$ be an independent set of size $k$ in $G-H - N_P(H)$. Then, by Lemma \ref{claim01}$(ii)$, \( G\bigl[I \cup \{z,w\}\bigr] \cong K_2 \cup kK_1 \), a contradiction.

This completes the proof of Theorem \ref{thm2}.
\proofend 

\section{Conclusion}
\label{sec5}
In this paper, we obtained two sufficient conditions for Hamilton-connectedness in $(K_2 \cup kK_1)$-free graphs. 
The first result (Theorem \ref{thm1}) reduces the connectivity requirement in Liu's result \cite{liu} from $\kappa(G) \ge 2k$ to $\kappa(G) \ge k+1$ by adding the minimum degree condition $\delta(G) \ge 2k$.
In Proposition~\ref{prop2.8}, 
we construct a $(k-1)$-connected graph $M$ showing that the connectivity condition in Theorem \ref{thm1} cannot be reduced to $\kappa(G) \ge k-1$. It would be natural to further consider the case of $k$-connected graphs; formally, we pose the following problem.

\begin{pb}
	For each integer $k \ge 1$, is it true that every $k$-connected $(K_2 \cup kK_1)$-free graph with $\tau(G) > 1$ Hamilton-connected?
\end{pb}
The second result (Theorem \ref{thm2}) shows that under an independence number condition $2k+1 \le \alpha(G) < \frac{n}{2}$, the connectivity can be further relaxed to $k$.
However, we are not sure whether the minimum degree condition is sharp in general, and we are
interested in whether it can be relaxed to $k$.
Finally, we pose the following problem.
\begin{pb}
For each integer $k \geq 1$, is it true that every \(k\)-connected \((K_2 \cup kK_1)\)-free graph with $2k+1 \leq \alpha(G) < \frac{n}{2}$ Hamilton-connected?
\end{pb}

\section*{ Declaration of competing interest }
There is no conflict of interest.

\section*{ Data availability }
No data was used for the research described in the paper.

\section*{ Acknowledgements }
The fourth author would like to thank the hospitality of the National Institute of
Education, Nanyang Technological University in Singapore, where part of the work was done.
This work was partly supported by the National Natural Science Foundation of China (Nos. 12101126 and 12371340), Natural Science Foundation of Fujian Province (No. 2023J01539). This work was also partly supported by China Scholarship Council (No. 202409100010).

\end{CJK}

\begin{thebibliography}{00}
\bibitem{survey}
D.~Bauer, H.J.~Broersma and E.~Schmeichel,
\newblock Toughness in graphs--a survey,
\newblock {\it Graphs Combin.}
{\bf 22} (2006) 1 -- 35.


\bibitem{9/4} D.~Bauer, H.J.~Broersma and H.J.~Veldman, Not every 2-tough graph is Hamiltonian, 
{\it Discrete Appl. Math.}
{\bf 99} (2000) 317 -- 321.

\bibitem{planar}
T.~B\"{o}hme, J.~Harant and M.~Tk\'{a}\v{c},
\newblock More than one tough chordal planar graphs are Hamiltonian,
\newblock 
{\it J. Graph Theory}
{\bf 32} (1999) 405 -- 410.



\bibitem{Bd} J.A. Bondy and U.S.R. Murty, Graph Theory, Springer, New York, 2008.


\bibitem{2k21}
H.J.~Broersma, V.~Patel and A.~Pyatkin,
\newblock On toughness and Hamiltonicity of \( 2K_2 \)-free graphs,
\newblock 
{\it J. Graph Theory}
{\bf 75} (2014) 244 -- 255.


\bibitem{chordal}
G.~Chen, M.S.~Jacobson, A.E.~K\'{e}zdy and L.~Jen\"{o},
\newblock Tough enough chordal graphs are Hamiltonian,
\newblock 
{\it Networks}
{\bf 31}  (1998) 29 -- 38.


\bibitem{chvatal} V.~Chv\'{a}tal, Tough graphs and Hamiltonian circuits, 
{\it Discrete Math.}
{\bf 5} (1973) 215 -- 228.



\bibitem{ce}
V.~Chv\'{a}tal and P.~Erd\H{o}s,
\newblock A note on Hamiltonian circuits,
\newblock {\it Discrete Math.}
{\bf 2} (1972) 111 -- 113.


\bibitem{Erdos59} P.~Erd\H{o}s and T. Gallai, On maximal paths and circuits of graphs, 
{\it Acta Math. Acad. Sci. Hungar.}
{\bf 10} (1959) 337 -- 356.



\bibitem{p32p1}
Y.~Gao and S.~Shan,
\newblock Hamiltonian cycles in 7-tough \( (P_3 \cup 2P_1) \)-free graphs,
\newblock {\it Discrete Math.}
{\bf 345} (2022) 1 -- 7.



\bibitem{p23p1}
A. Hatfield and E. Grimm, Hamiltonicity of 3-tough $(K_2 \cup 3K_1)$-free graphs, arXiv:2106.07083.



\bibitem{fan}
Z. Hu, J. Wang and C. Shen,
\newblock A Fan-type condition for cycles in 1-tough and $k$-connected $(P_{2}\cup kP_{1})$-free graphs,
\newblock 
{\it Appl. Math. Comput.}
{\bf 494} (2025) 129300.


\bibitem{claw}
H. Hui, X. Hu and W. Yang,
\newblock On the minimum degree of minimally 1-tough, triangle-free graphs and minimally $3/2$-tough, claw-free graphs,
\newblock {\it Discrete Math.}
{\bf 346} (2023) 113352.



\bibitem{jung}
H.A.~Jung,
\newblock On a class of posets and the corresponding comparability graphs,
\newblock 
{\it J. Combin. Theory Ser. B}
{\bf 24} (1978) 125 -- 133.



\bibitem{chordal1}
A.~Kabela and T.~Kaiser,
\newblock 10-tough chordal graphs are Hamiltonian,
\newblock {\it J. Combin. Theory Ser. B}
{\bf 122}  (2017) 417 -- 427.


\bibitem{p3p1+p22p1+p4}
B.~Li, H.J.~Broersma and S.~Zhang,
\newblock Forbidden subgraphs for hamiltonicity of 1-tough graphs,
\newblock 
{\it Discuss. Math. Graph Theory}
{\bf 36} (2016) 915 -- 929.

\bibitem{liu}
F. Liu,
\newblock Every \(2k\)-connected \((P_2 \cup kP_1)\)-free graph with toughness greater than one is Hamiltonian-connected,
\newblock 
{\it Bull. Korean Math. Soc.}
{\bf 63} (2026)  881 -- 889


\bibitem{2k23}
K.~Ota and M.~Sanka,
\newblock Hamiltonian cycles in 2-tough \( 2K_2 \)-free graphs,
\newblock 
{\it J. Graph Theory}
{\bf 101} (2022) 769 -- 781.


\bibitem{km}
K.~Ota and M.~Sanka,
\newblock Some conditions for Hamiltonian cycles in 1-tough \( (K_2 \cup kK_1) \)-free graphs,
\newblock {\it Discrete Math.}
{\bf 347} (2024) 113841.

\bibitem{sanka}
M.~Sanka,
\newblock On Hamiltonian cycles of 1-tough $(P_2 \cup kP_1)$-free graphs, 
\newblock arXiv:2605.19508.


\bibitem{2k22}
S.~Shan,
\newblock Hamiltonian cycles in 3-tough \( 2K_2 \)-free graphs,
\newblock {\it J. Graph Theory} {\bf 94} (2020) 349 -- 363.


\bibitem{p2p3}
S.~Shan,
\newblock Hamiltonian cycles in tough \( (P_2 \cup P_3) \)-free graphs,
\newblock 
{\it Electron. J. Combin.}
{\bf 28} (2021) \#P1.36.


\bibitem{p4p1}
S. Shan,
\newblock Hamiltonian cycles in tough $(P_{4} \cup P_{1})$-free graphs,
\newblock arXiv:2504.08936.
\bibitem{2p2p1}
S. Shan and A. Tanyel, Hamilton cycles in tough $(2P_{2} \cup P_{1})$-free graphs, arXiv:2506.12684.

\bibitem{shishan}
L.~Shi and S.~Shan,
\newblock A note on Hamiltonian cycles in 4-tough \( (P_2 \cup kP_1) \)-free graphs,
\newblock {\it Discrete Math.} {\bf 345} (2022) 113081.


\bibitem{xuleyou}
L.~Xu, C.~Li and B.~Zhou,
\newblock Hamiltonicity of 1-tough \( (P_2 \cup kP_1) \)-free graphs,
\newblock {\it Discrete Math.} {\bf 347} (2024) 113755.

\bibitem{zbw}
W.~Zheng, H.~Broersma and L.~Wang,
\newblock Toughness, forbidden subgraphs, and Hamilton-connected graphs,
\newblock 
{\it Discuss. Math. Graph Theory} {\bf 42} (2022) 187 -- 196.




	
	\end{thebibliography}
\end{document}